\documentclass[twoside,reqno]{amsart}

\usepackage{amsfonts,amsmath,amscd,amsthm,amssymb}
\usepackage{mathtools}
\usepackage[mathscr]{euscript} 
\usepackage{graphics}
\usepackage{wrapfig}
\usepackage{lscape}
\usepackage{rotating}
\usepackage{epsfig}
\usepackage{cite}
\usepackage{color}
\usepackage{booktabs}
\usepackage[vlined,ruled]{algorithm2e}
\usepackage{multirow}
\usepackage{todonotes}
\usepackage{ marvosym }

\usepackage{tikz}
\usetikzlibrary{patterns}
\newtheorem{example}{Example}[]


\usepackage[normalem]{ulem}

\newtheorem{lemma}{Lemma}
\newtheorem{theorem}{Theorem}
\newtheorem{remark}{Remark}
\newtheorem{definition}{Definition}

\DeclareMathOperator{\Div}{\operatorname{div}}

\DeclareMathOperator{\id}{\operatorname{id}}

\def\restrict#1{\raise-0.2ex\hbox{\ensuremath|}_{#1}}

\newcommand{\bld}[1]{\boldsymbol{#1}}


\newcommand{\compS}{\underline{\boldsymbol{\phi}}}
\newcommand{\compSh}{\underline{\boldsymbol{\phi}}_h}

\newcommand{\compU}{\underline{\boldsymbol{u}}}
\newcommand{\compUh}{\underline{\boldsymbol{u}}_h}

\newcommand{\compUhspace}{\underline{\boldsymbol{U}\!}_h}
\newcommand{\compUspace}{\underline{\boldsymbol{U}\!}(h)}


\newcommand{\compV}{\underline{\boldsymbol{v}}}
\newcommand{\compW}{\underline{\boldsymbol{w}}}

\newcommand{\compVh}{\underline{\boldsymbol{v}}_h}
\newcommand{\compWh}{\underline{\boldsymbol{w}}_h}


\newcommand{\Sh}{\bld{V}_{\!h}}

\newcommand{\pho}[1]{P_{\!h,0}^{#1}}
\newcommand{\ph}[1]{P_{\!h}^{#1}}
\newcommand{\Ph}[1]{\bld{P}_{\!h,0}^{#1}}

\newcommand{\curls}{{{ {\bld {\mathrm{curl}}}}}}

\newcommand{\grads}{{\nabla}}

\newcommand{\pol}{\mathbb{P}}

\newcommand{\bR}{\mathbb{R}}
\newcommand{\Oh}{\mathcal{T}_h}

\newcommand{\Eh}{\mathcal{F}_h}

\newcommand{\jmp}[1]{[\![#1 ]\!]}
\newcommand{\vertiii}[1]{{\left\vert\kern-0.25ex\left\vert\kern-0.25ex\left\vert #1
    \right\vert\kern-0.25ex\right\vert\kern-0.25ex\right\vert}}

\newcommand{\dx}{\,\mathrm{d}\bld x}
\newcommand{\ds}{\,\mathrm{d}\bld s}

\newcommand{\gradss}{\nabla_s}
\begin{document}
\title[$H(\Div)$-conforming HDG for linear elasticity]{
Locking free and gradient robust $H(\Div)$-conforming HDG 
methods for linear elasticity}
\author{Guosheng Fu}
\address{Department of Applied and Computational Mathematics and 
Statistics, University of Notre Dame, USA.}
\email{gfu@nd.edu}
\author{Christoph Lehrenfeld}
\address{
Institut f\"ur Numerische und Angewandte Mathematik, Lotzestr. 16-18, D-37083 G\"ottingen, Germany}
\email{lehrenfeld@math.uni-goettingen.de}
\author{Alexander Linke}
\address{Weierstrass Institute for Applied Analysis and Stochastics, Mohrenstr. 39, D-10117 Berlin, Germany}
\email{alexander.linke@wias-berlin.de}
\author{Timo Streckenbach}
\address{Weierstrass Institute for Applied Analysis and Stochastics, Mohrenstr. 39, D-10117 Berlin, Germany}
\email{timo.streckenbach@wias-berlin.de}

\keywords{linear elasticity, nearly incompressible, locking phenomenon, volume-locking, gradient-robustness,
Discontinuous Galerkin, $H(\Div)$-conforming HDG methods}
\subjclass{65N30, 65N12, 76S05, 76D07}

\begin{abstract}
  Robust discretization methods for
  (nearly-incompressible) linear elasticity are free of volume-locking and
  gradient-robust. While volume-locking is a well-known problem that can be
  dealt with in many different discretization approaches, the concept of
  gradient-robustness for linear elasticity is new. We discuss both aspects
  and propose novel Hybrid Discontinuous Galerkin (HDG) methods for linear elasticity. The starting point for these methods is a divergence-conforming discretization. As a consequence of its well-behaved Stokes limit the method is gradient-robust and free of volume-locking. To improve computational efficiency, we additionally consider discretizations with relaxed di\-ver\-gence-conformity and a modification which re-enables gradient-robustness,
yielding a robust and quasi-optimal discretization also in the sense of HDG superconvergence.
\end{abstract}
\maketitle

\section{Introduction}
\label{sec:intro}
Let $\Omega\subset \bR^d$, $d=2,3$, be a bounded polygonal/polyhedral domain. 
We consider the numerical solution of the 
isotropic linear elasticity problem
\begin{subequations}
\label{eqns}
\begin{alignat}{2} 
\label{eq1}
- \Div\left(2\mu\gradss \bld u\right)  -  \grads\left(\lambda\,\Div \bld u\right)  &= \bld f \qquad && \text{in $\Omega$,}\\
\label{eq2}
\bld u &= \bld 0 && \text{on $\partial \Omega$,}
\end{alignat}
\end{subequations}
where $\mu, \lambda$ are the (constant) Lam\'e parameters, $\bld u$ is the displacement,
$\gradss\bld u = {(\grads\bld u + 
\grads^T\bld u)}/{2}$ is the symmetric gradient operator, and $\bld f$ is an external force.
We consider homogeneous boundary condition for simplicity 
and focus on issues that are connected to
the fact that \eqref{eqns} is a vector-valued PDE,
and which arise in the nearly-incompressible limit
$\lambda\to\infty$. Indeed, the vector-valued displacements allow for
a natural, orthogonal splitting
\begin{equation}
  \bld u = \bld u^0 + \bld u^\perp
\end{equation}
in a divergence-free part $\bld u^0 \in \bld V^0$
and a perpendicular part $\bld u^\perp \in \bld V^\perp$
with
\begin{subequations}
\begin{align}
    \bld{V}^0 & := \left \{ \bld v \in \bld H^1_0(\Omega) : 
    \Div \, \bld v = 0  \right \}, \label{v-div}\\
    \bld{V}^\perp & := \left \{ \bld v \in \bld H^1_0(\Omega) : 
   \, (\gradss \bld v, \gradss \bld v^0)  = 0 
   \quad \text{for all $\bld v^0 \in \bld V^0$} \right \}
\end{align}
\end{subequations}
and the divergence-free part $\bld u^0$ can be easily shown to fulfill
the (formal) incompressible Stokes system
\begin{subequations} \label{eq:stokes}
\begin{alignat}{2}
  - \Div\left(2\mu\gradss \bld u^0\right) + \nabla p^0 & = \bld f
    \qquad && \text{in $\Omega$,} \\
      \Div\bld u^0 & = 0
    \qquad && \text{in $\Omega$,} \\
    \bld u^0 &= \bld 0 && \text{on $\partial \Omega$,}
\end{alignat}
\end{subequations}
where $p^0$ denotes a (formal) Stokes pressure, which serves
as the Lagrange multiplier for the divergence constraint
$\Div \bld u^0 = 0$.
Moreover, we will construct structure-preserving discretizations
for \eqref{eqns}, which allow for a reasonable discrete, orthogonal 
splitting
\begin{equation} \label{eq:discrete:splitting}
  \bld u_h = \bld u^0_h + \bld u^\perp_h,
\end{equation}
where also $\bld u^0_h$ is a discrete solution
of a discrete inf-sup stable and
{\em pressure-robust} space discretization of the incompressible
Stokes problem \eqref{eq:stokes} \cite{LinkeMerdon16}.
It is important to emphasize that the discrete splitting
\eqref{eq:discrete:splitting} is orthogonal, since numerical errors in
$\bld u^0_h$ cannot be compensated by contributions in
$\bld u^\perp_h$. 
Discrete inf-sup stability prevents --- which is well-known ---
the notorious Poisson (volume-) locking phenomenon, which
is a lack of optimal approximibility of divergence-free
vector fields by {\em discretely divergence-free} vector fields
\cite{BabuskaSuri1992}.
On the other hand,
{\em pressure-robustness} \cite{LinkeMerdon16,JLMNR2017}
for the Stokes part \eqref{eq:stokes} of problem \eqref{eqns}
avoids that gradient-fields in the force
balance incite numerical errors in
the displacements $\bld u$
due to an imperfect $\bld L^2$ orthogonality between gradient-fields
and {\em discretely divergence-free} vector fields.
For the incompressible Stokes problem \eqref{eq:stokes}, it was recently recognized
as similarly fundamental as inf-sup stability \cite{JLMNR2017,
MR4031577},
and it implies that only the divergence-free part of $\bld f$,
its so-called Helmholtz projector $\mathbb{P}(\bld f)$
\cite{JLMNR2017},
determines $\bld u^0$ --- and so $\bld u^0_h$ should be
determined by $\mathbb{P}(\bld f)$ only, as well.
In fact, recent investigations
show that pressure-robustness
becomes most important for multi-physics \cite{MR3481034} and
non-trivial high Reynolds number problems \cite{MR4031577}.
To put it simply, pressure-robustness guarantees
that a spatial discretization of the incompressible
Navier--Stokes equations in primitive variables possesses
an accurate, implicitly defined discrete vorticity equation
\cite{JLMNR2017}.

Similarly, the novel concept of
gradient-robustness for (nearly incompressible)
linear elasticity wants to assure
good accuracy properties of (an implicitly defined)
discrete vorticity equation for
the vorticity $\bld \omega := \curls ~ \bld u$.
The key idea to achieve this is that the
discrete $\bld L^2$ orthogonality between gradient-fields
and {\em discretely divergence-free} (test) vector fields
is the {\em weak equivalent of the vector calculus
identity $\curls ~ \nabla \psi  = \bld 0$
\cite{JLMNR2017}, which holds
for arbitrary smooth potentials $\psi$.}
We mention that this concept of {\em gradient-robustness}
can be introduced for quite a few vector PDEs. Recently,
it has already been introduced for the compressible barotropic
Stokes equations in primitive variables \cite{akbas2019gradientrobust}.

Concerning robustness of classical space discretizations
for nearly-incompressible linear elasticity, it is well-known that the classical low-order 
pure displacement-based conforming finite element methods suffer
from (Poisson) volume-locking, i.e., a
deterioration in performance in some cases as the 
material becomes incompressible. Various techniques have been introduced in
the literature to avoid volume-locking. This includes, for example,  the
high-order $p$-version
conforming methods \cite{Vogelius83,Scott85}, 
the technique of {\it reduced and selective integration}
\cite{Zienkiewics71,Hughes78} for low-order conforming methods, 
the nonconforming methods \cite{Falk91},
the discontinous Galerkin methods \cite{Hansbo02,Cockburn05},
various mixed methods
\cite{Arnold84,ArnoldDouglasGupta84,ArnoldWinther02,ArnoldWinther03,
  Arnold07,  GopalakrishnanGuzman11,PechsteinSchoberl11,
  GuzmanNeilan14},
the virtual element methods \cite{bdbd13},
the hybrid high-order methods \cite{dpdp15}, and 
the hybridizable discontinuous Galerkin (HDG) methods 
\cite{SoonCockburnStolarski09,CockburnShiHDGElas13,
QiuShi16,CockburnFu17c}.
However, none of the above cited references 
discusses about the property of {gradient-robustness}. 
It turns out that all of the above cited references, except the $p$-version
conforming methods \cite{Vogelius83,Scott85}
are not gradient-robust (see Definition \ref{def:gob} below).

Nevertheless, we conjecture
that gradient-robustness for (nearly-incompressible)
elasticity becomes important, whenever {\em strong and
complicated} forces of gradient type appear in the momentum balance.
In this contribution, we only want to discuss one possible
application coming from a {\em multi-physics} context,
i.e., we want to show how complicated gradient forces
may develop in elasticity problems:
In linear-thermoelastic solids the constitutive equation for the stress tensor reads as
\[
\sigma=\bld C\left\{ \varepsilon-\varepsilon^{\mathrm{th}}\right\}
\]
with $\varepsilon(\bld u) = \gradss \bld u$ and with
\[
\varepsilon^{\mathrm{th}}=\alpha(\theta-\theta_{0}) \bld I,
\]
where $\bld C$ and $\varepsilon$
denote the elasticity tensor and the linearized strain tensor.
Further, $\alpha$ denotes the (scalar) coefficient of linear
expansion and $\theta_{0}$ denotes a
(spatially and temporally) constant
reference temperature. For isotropic materials, this reduces to
\begin{align*}
  \sigma^{\mathrm{el}}&=\bld C \varepsilon=2\mu\varepsilon+\lambda\mathrm{tr}(\varepsilon) \bld I
  \\
  \sigma^{\mathrm{th}}&=\bld C \varepsilon^{\mathrm{th}}=(2\mu+3\lambda)\alpha(\theta-\theta_0 ) \bld I
 \\
  \sigma=\sigma^{\mathrm{el}} - \sigma^{\mathrm{th}} &= 2\mu\varepsilon+\lambda\mathrm{tr}(\varepsilon) \bld I-(2\mu+3\lambda)\alpha(\theta-\theta_0) \bld I
\end{align*}
with Lam\'{e} coefficients $\mu$, $\lambda$, see
\cite[pp. 528--529]{haupt}. Thus,
we finally obtain a momentum balance
\begin{equation}
- \Div\left(2\mu\gradss \bld u\right)
 - \grads\left(\lambda\,\Div \bld u\right)
  = - (2\mu+3\lambda)\alpha \Div \left ( \theta \bld I \right )
  = - (2\mu+3\lambda)\alpha \nabla \theta,
\end{equation}
where $-(2\mu+3\lambda)\alpha \theta$ denotes the potential
of a gradient force. For complicated and large temperature
profiles $\theta$ this gradient force can be made arbitrarily
complicated, in principle, and gradient-robustness should
be important in practice. However, in this contribution
we only want to study gradient-robustness from
the point of numerical analysis. Its (possible) importance in applications
will be investigated in subsequent contributions.

In this paper, we consider the discretization to \eqref{eqns} with 
divergence-conforming HDG methods
\cite{Lehrenfeld:10,LehrenfeldSchoberl16}, which are both
volume-locking-free and gradient-robust.

The rest of the paper is organized as follows:
In Section \ref{sec:mot}, we introduce the concepts of volume-locking and gradient-robustness by considering very basic discretization ideas for \eqref{eqns}.
Then, in Section \ref{sec2:disc}
we present and analyze
the divergence-conforming HDG scheme, in particular, we prove that the
scheme is both
locking-free and gradient-robust. 
In Section \ref{sec:relaxedhdiv} we 
consider and analyze two (more efficient) modified HDG schemes.
We conclude in Section \ref{sec:conclude}.

\section{Motivation: Volume-locking and gradient-robustness}
\label{sec:mot}
In this section we introduce the concepts of volume-locking and gradient-ro\-bust\-ness. To illustrate these we consider very basic discretization ideas
for \eqref{eqns} in this section and give a definition of volume-locking
and gradient-robustness. Only later, in the subsequent sections we turn our
attention to our proposed discretization, an $H(\Div)$-conforming HDG method and analyse it.

\subsection{A basic method}
Let us start with a very basic method. Let $\Oh=\{T\}$ be a conforming simplicial triangulation of $\Omega$. 
We use a standard vectorial $H^1$-conforming piecewise polynomial finite element space for the displacement function $\bld u$ in \eqref{eqns}:
$$
\Ph{k} : = [\pho{k}]^d \quad \text{with} \quad 
\ph{k} : = \prod_{T\in\Oh} \pol^{k}(T) \cap H^1(\Omega), \text{ and }
\pho{k}:=\ph{k}\cap H^1_0(\Omega)
$$
where $\pol^{k}(T)$ is the space of polynomials up to degree $k$. The numerical scheme is: Find $\bld u_h \in \Ph{k}$ s.t. for all $\bld v_h \in \Ph{k}$ there holds
\begin{equation} \tag{M1} \label{eq:naive}
a(\bld u_h, \bld v_h) \! := \!\!\int_{\Omega} 2\mu\,\gradss(\bld u_h)\!:\!\!\gradss(\bld v_h)  \dx
   + \int_{\Omega}  \lambda\,\Div(\bld u_h)\Div(\bld v_h)
   \dx
 = \int_{\Omega} \!\!\bld f \!\cdot\! \bld v_h \, \dx 
\end{equation}
We choose a simple numerical example to investigate the performance of the method.
\begin{example}\label{ex:1}
  We consider the domain $(0,1)^2$ and a uniform triangulation into right triangles. For the right hand side we choose
  $$
  \bld f = 2 \mu \pi^2 (\sin(\pi x)\sin(\pi y),\cos(\pi x)\cos(\pi y))
  $$
  and Dirichlet boundary conditions such that 
  $$\bld u = (\sin(\pi x)\sin(\pi y),
  \cos(\pi x)\cos(\pi y)
  )$$ is the unique solution.
\end{example}
\begin{figure}
  \includegraphics[height=0.255\textwidth]{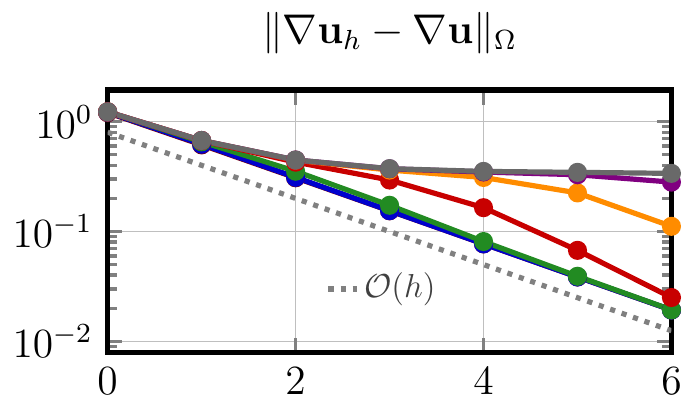}
  \hspace*{-0.02\textwidth}
  \includegraphics[height=0.255\textwidth]{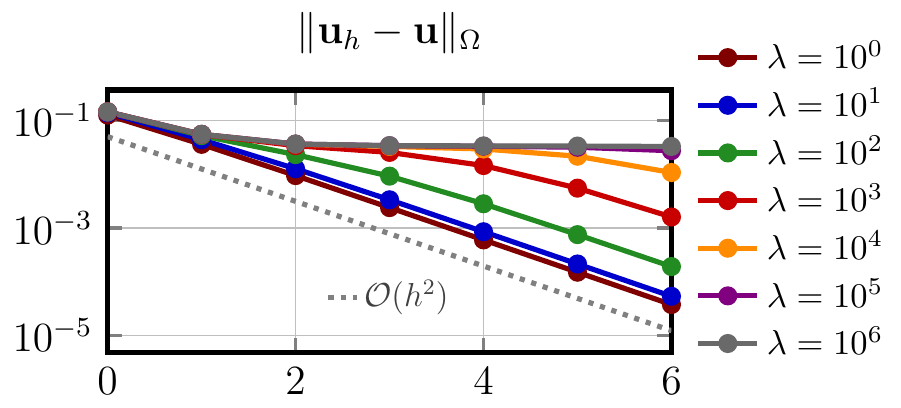}
  \vspace*{-0.06\textwidth}
  \caption{Discretization errors for the method \eqref{eq:naive}, $k=1$, under mesh refinement ($x$-axis: refinement level $L$) and different values of $\lambda$ for Example \ref{ex:1}.}
  \vspace*{-0.02\textwidth}
  \label{fig:M1startP1}
\end{figure}

For successively refined meshes with smallest edge length $h=2^{-(L+2)}$, fixed polynomial degree $k=1$ and levels $L=0,..,6$ we compute the error $\bld u - \bld u_h$ in the $L^2$ norm and the $H^1$ semi-norm for different values of $\lambda$. The absolute errors are displayed in Figure \ref{fig:M1startP1}. Let us emphasize that the solution $\bld u$ is independent of $\lambda$.
 For fixed and moderate $\lambda$ we observed the expected convergence rates, i.e. second order in the $L^2$ norm and first order in the $H^1$ norm. However, we observe that the error is severely depending on $\lambda$. Especially for larger values of $\lambda$ the asymptotic convergence rates for $h \to 0$ are shifted to finer resolutions; for instance, for $\lambda=10^5$ convergence can not yet be observed on the chosen meshes. Overall, we observe an error behavior of the form $\mathcal{O}(\lambda \cdot h^{k})$ for the $H^1$ semi-norm and $\mathcal{O}(\lambda \cdot h^{k+1})$ for the $L^2$ norm.
From the discretization \eqref{eq:naive} we directly see that with
increasing $\lambda$ we enforce that $\Div\bld  u$ tends to zero (pointwise). For piecewise linear functions, however, the only divergence-free function that can be represented is the constant function.
This leads to the observed effect which is known as \emph{volume-locking}. We give a brief definition here:

Volume-locking is a structural property of the discrete finite element
spaces involved. In the limit case $\lambda \to \infty$, one
expects that the limit displacement $\bld u$ is divergence-free.
Recalling \eqref{v-div} and introducing
the discrete counterpart
\begin{equation}
  \bld{V}^0_h := \left \{ \bld v_h \in \Ph{k} : 
    \Div_h \, \bld v_h = 0  \right \},
\end{equation}
where $\Div_h$ is a discretized $\Div$ operator,
one is ready for a precise definition of volume-locking:
\begin{definition} \label{def:vollock}
Volume-locking means that the discrete subspace of discretely divergence-free
vector fields of $\bld V^0_h$ does not have optimal approximation properties
versus smooth, divergence-free vector fields
$\bld v \in \bld V^0 \cap \bld H^{k+1}(\Omega)$
\begin{subequations}
\begin{equation}
  \inf_{\bld v_h \in \bld V^0_h}
   \|\nabla \bld v- \nabla \bld v_h\|_{\Omega}
\not\leq C h^k | \bld v  |_{k+1},
\end{equation}
although the entire vector-valued finite element space
$\Ph{k}$ possesses optimal approximation properties of the form
\begin{equation}
  \inf_{\bld v_h \in \Ph{k}}
   \|\nabla \bld v- \nabla \bld v_h\|_{\Omega}
\leq C h^k | \bld v  |_{k+1}.
\end{equation}
\end{subequations}
\end{definition}

\begin{figure}[t]
  \includegraphics[height=0.255\textwidth]{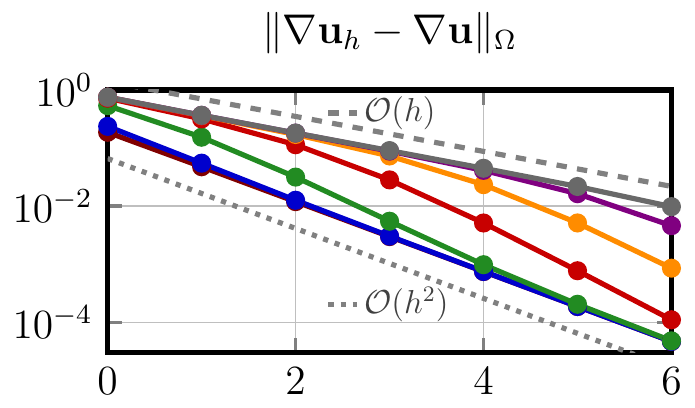}
  \hspace*{-0.02\textwidth}
  \includegraphics[height=0.255\textwidth]{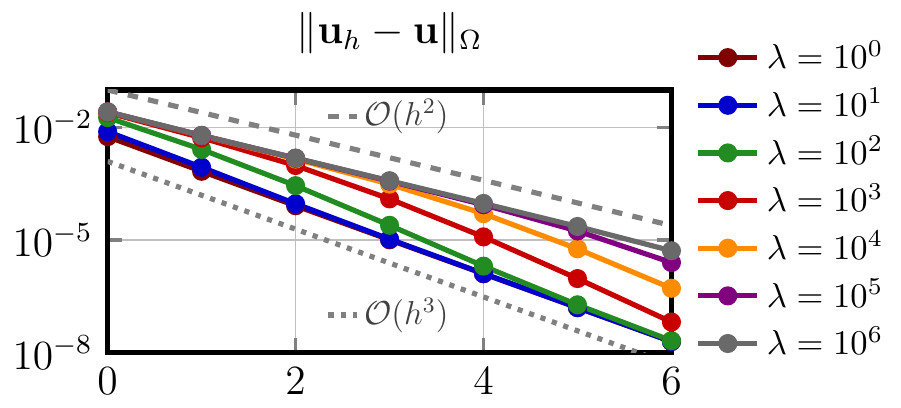}
  \hspace*{-0.06\textwidth}
  \caption{Discretization errors for the method \eqref{eq:naive}, $k=2$, under mesh refinement ($x$-axis: refinement level $L$) and different values of $\lambda$ for Example \ref{ex:1}.}
  \hspace*{-0.02\textwidth}
  \label{fig:M1startP2}
\end{figure}

In the sense of Definition \ref{def:vollock} the discretization \eqref{eq:naive} with $k=1$ is obviously not free of volume-locking.
The problem can be alleviated by going to higher order, cf. Figure
\ref{fig:M1startP2} for the same problem and discretization but with order $k=2$. 
We observe that convergence is secured in this case also for the highest values of $\lambda$. However, the discretization error still depends strongly on $\lambda$ and for large $\lambda$ and insufficiently fine mesh sizes $h$ an order drop can be observed. The overall convergence behaves 
like $\mathcal{O}(\min\{ h^{k-1}, \lambda h^{k} \})$ for the $H^1$
semi-norm and $\mathcal{O}(\min\{ h^{k}, \lambda h^{k+1} \})$ for the $L^2$
norm. Hence, even for $k=2$ the discretization \eqref{eq:naive} is not free of volume-locking.

\subsection{A volume-locking-free discretization through mixed formulation}
To get rid of the locking-effect one often reformulates the grad-div term in \eqref{eqns} by rewriting the problem in mixed form as 
\begin{subequations}
  \label{mstokes}
  \begin{alignat}{2} 
  \label{eq:mstokes:1}
  - \Div\left(2\mu\gradss \bld u\right)  -  \nabla p  &= \bld f \qquad && \text{in $\Omega$,}\\
  \label{eq:mstokes:2}
  \Div \bld u + \lambda^{-1} p & = 0 \qquad && \text{in $\Omega$,}
  \end{alignat}
\end{subequations}
Here, the auxiliary variable $p$ approximating $\lambda \Div \bld u$ is introduced.
In the limit $\lambda \to \infty$ this yields an incompressible Stokes problem. With the
intention to avoid volume-locking we now consider a discretization that is
known to be stable in the Stokes limit. Here, we take the well-known Taylor-Hood velocity-pressure pair: Find $(\bld u_h, p_h) \in \Ph{k}\times\ph{k-1}$, s.t. 
\begin{subequations}
  \label{eq:mdstokes}  
  \begin{alignat}{2} 
  \label{eq:mdstokes:1}  \tag{M2a}
  \int_{\Omega} 2\mu\,\gradss(\bld u_h):\gradss(\bld v_h)  \dx + \int_{\Omega}  \Div(\bld v_h) p_h  \dx & = \int_{\Omega} \bld f \!\cdot\! \bld v \, \dx   \quad&& \forall~v_h \in \Ph{k},\\
\int_{\Omega} \Div(\bld u_h) q_h  \dx -
\int_{\Omega} \lambda^{-1} p_h q_h \dx 
& = 0 \quad&& \forall~q_h \in \ph{k-1}.
  \label{eq:mdstokes:2}  \tag{M2b}
  \end{alignat}
\end{subequations}
It is well-known that for every LBB-stable Stokes discretization the mixed formulation of linear elasticity guarantees that the discretization is free of volume-locking in the sense of Definition \ref{def:vollock},
cf. \cite[Chapter VI.3]{braess2013finite}.

Let us note that we can interprete \eqref{eq:mdstokes:2} as $p_h = \lambda \Pi_{\ph{}} \Div(\bld u_h)$ where $\Pi_{\ph{}}$ is the $L^2(\Omega)$ projection into $\ph{k-1}$. Hence, we can formally rewrite (M2) as: Find $\bld u_h \in \Ph{k}$ s.t.
\begin{equation} \tag{M2*} \label{eq:th2}
  \int_{\Omega} 2\mu\,\gradss(\bld u_h):\gradss(\bld v_h)  \dx + \int_{\Omega}  \lambda\,\Pi_{\ph{}} \Div(\bld u_h)\Div(\bld v_h)  \dx = \int_{\Omega} \bld f \cdot \bld v \, \dx. 
\end{equation}
We note that the only difference between \eqref{eq:th2} and \eqref{eq:naive} is in the projection $\Pi_{\ph{}}$.
Hence, the $\Div_h$ in a corresponding subspace $\bld{V}^0_h$ is different, 
$$  \bld{V}^0_h := \left \{ \bld v_h \in \Ph{k} : 
  \Pi_{\ph{}} \Div \left(  \bld v_h \right) = 0  
\right \},$$
yielding a much richer space $\bld{V}^0_h$ to approximate with.
The scheme \eqref{eq:th2} can be considered as an improvement over 
 the plain scheme \eqref{eq:naive} using a
 {\it reduced integration} \cite{Hughes78} 
  for the grad-div term to avoid volume-locking.
  See \cite{Malkus78} 
  for a discussion of the
  equivalence of certain mixed finite element methods with displacement
  methods which use the {\it reduced and selective integration} technique.

\begin{figure}
  \includegraphics[height=0.255\textwidth]{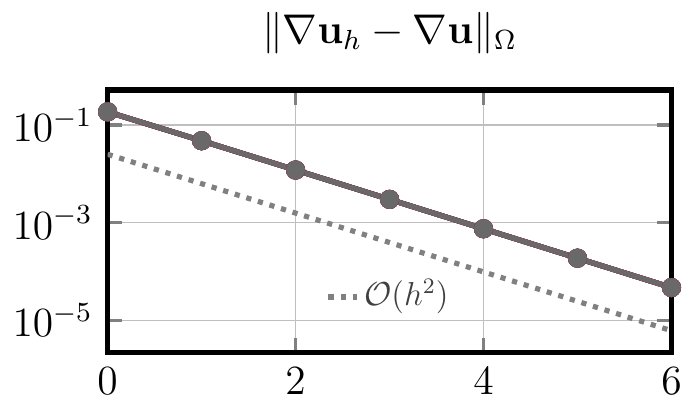}
  \hspace*{-0.02\textwidth}
  \includegraphics[height=0.255\textwidth]{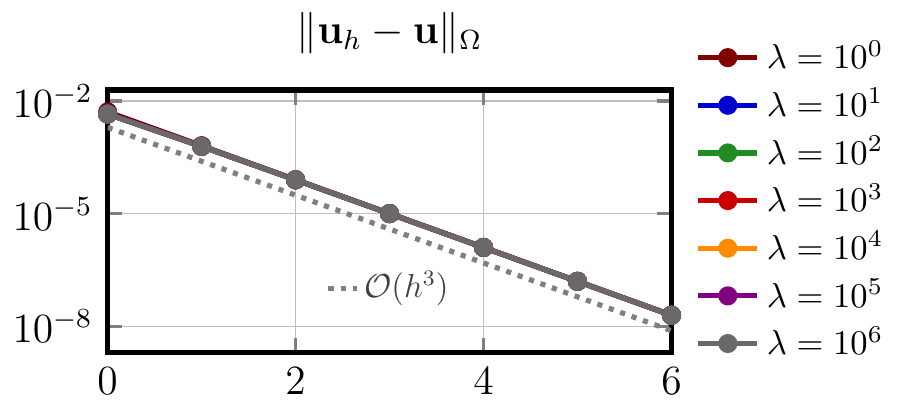}
  \hspace*{-0.06\textwidth}
  \caption{Discretization errors for the method \eqref{eq:mdstokes}, $k=2$, under mesh refinement ($x$-axis: refinement level $L$) and different values of $\lambda$ for Example \ref{ex:1}.}
  \label{fig:M2startP2}
  \hspace*{-0.02\textwidth}
\end{figure}

In Figure \ref{fig:M2startP2} we display the results of the previous numerical experiment with the method in \eqref{eq:mdstokes:1}--\eqref{eq:mdstokes:2}. We observe that indeed, the discretization error is essentially independent of $\lambda$ and optimally convergent. 


\subsection{Gradient-robustness}
In the previous subsection we considered a divergence-free force field. As a result of the Helmholtz decomposition we can decompose every $L^2$ force field into a divergence-free and an irrotational part. In this section we now consider the case where the force field is irrotational, i.e. a gradient of an $H^1$ function. This will lead us to the formulation of \emph{gradient-robustness}. 
Assume that there is $\phi \in H^1(\Omega)$
with $\int \phi \, \mathit{dx} = 0$ so that $\bld f = \nabla \phi$. With $\lambda \to \infty$ we have $p \to \phi$ and $\bld u \to \bld 0$
, i.e. in the Stokes limit gradients in the force field are solely balanced by the pressure and have no impact on the displacement.
In the next subsection,
this reasoning will be made more precise by
deriving an asymptotic result in the limit $\lambda \to \infty$.

\subsection{A definition of gradient-robustness} \label{ssec:defgradrob}
First, we introduce the
orthogonal complement 
of the weakly-differential divergence-free vector fields \eqref{v-div}
with respect to the inner-product
$a(\cdot, \cdot)$ defined in \eqref{eq:naive}:
\begin{equation}
    \bld{V}^\perp := \{ \bld u \in \bld{H}^1_0(\Omega) :
   a(\bld u, \bld v) = 0,
      \forall \bld v \in \bld{V}^0 \}.
\end{equation}
Then, the solution of the linear elasticity equation can be
decomposed as
\begin{equation}
  \bld u = \bld u^0 + \bld u^\perp, \qquad \bld u^0 \in \bld V^0,
    \quad \bld u^\perp \in \bld V^\perp,
\end{equation}
where $\bld u^0$ satisfies
 \begin{equation}
   \label{eq-decomp}
   a(\bld u^0,\bld v^0) = a(\bld u, \bld v^0)
   =(\bld f, \bld v^0),\quad \forall \bld v^0\in
   \bld V^0.
 \end{equation}

The following lemma characterizes a robustness property of exact solutions to linear elasticity problems.
\begin{theorem}[Gradient-robustness of nearly incompressible materials] \label{lem:gradient:field:robustness}
If the right hand side $\bld f \in H^{-1}(\Omega)$ in \eqref{eq1} is a gradient field, i.e. $\bld f=\nabla \phi$, $\phi \in L^2(\Omega)$, then it holds for the solution  $\bld u = \bld u^0 + \bld u^\perp$ of \eqref{eqns} (under homogeneous Dirichlet boundary conditions)
$$
  \bld u^0 = 0, \qquad \bld u^\perp = \mathcal{O}(\lambda^{-1}),
$$
i.e., for $\lambda \to \infty$ one gets
$\bld u = \bld u^\perp \to \bld 0$. 
\end{theorem}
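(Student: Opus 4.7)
The plan is to handle the two claims separately, leveraging the decomposition identity \eqref{eq-decomp} and the defining orthogonality of $\bld V^\perp$.

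For the first claim, I would test \eqref{eq-decomp} with $\bld v^0 = \bld u^0 \in \bld V^0$. Since $\bld f = \nabla \phi$ and $\bld u^0 \in \bld H^1_0(\Omega)$ is divergence-free, integration by parts gives
$$ (\bld f, \bld u^0) \,=\, \langle \nabla \phi, \bld u^0 \rangle \,=\, - (\phi, \Div \bld u^0) \,=\, 0. $$
On the left, $\Div \bld u^0 = 0$ collapses $a(\bld u^0, \bld u^0)$ to $2\mu\,\|\gradss \bld u^0\|_{L^2}^2$, and Korn's inequality then forces $\bld u^0 = \bld 0$.

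For the second claim, since $\bld u = \bld u^\perp$, I would test the full weak formulation with $\bld v = \bld u^\perp$ itself and apply the same integration-by-parts on the right-hand side to obtain
$$ 2\mu\,\|\gradss \bld u^\perp\|_{L^2}^2 + \lambda\,\|\Div \bld u^\perp\|_{L^2}^2 \,=\, -(\phi, \Div \bld u^\perp) \,\leq\, \|\phi\|_{L^2}\, \|\Div \bld u^\perp\|_{L^2}. $$
Dropping the first non-negative term yields the sharp divergence estimate $\|\Div \bld u^\perp\|_{L^2} \leq \lambda^{-1} \|\phi\|_{L^2}$.

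To upgrade this divergence bound into an $H^1$ bound of the same order, I would invoke the Bogovskii lifting (equivalently, the Ne\v{c}as/LBB inf-sup property of the divergence): since $\Div \bld u^\perp$ has zero mean, there exists $\bld w \in \bld H^1_0(\Omega)$ with $\Div \bld w = \Div \bld u^\perp$ and $\|\gradss \bld w\|_{L^2} \leq C\,\|\Div \bld u^\perp\|_{L^2}$. Then $\bld u^\perp - \bld w$ is divergence-free and lies in $\bld V^0$, so by the defining property of $\bld V^\perp$ we have $a(\bld u^\perp, \bld u^\perp - \bld w) = 0$. Expanding and using $\Div(\bld u^\perp - \bld w) = 0$ to kill the grad--div contribution leaves $2\mu\,\|\gradss \bld u^\perp\|_{L^2}^2 = 2\mu\,(\gradss \bld u^\perp, \gradss \bld w)$, and Cauchy--Schwarz then yields $\|\gradss \bld u^\perp\|_{L^2} \leq \|\gradss \bld w\|_{L^2} \leq C\,\|\Div \bld u^\perp\|_{L^2} \leq C\lambda^{-1}\|\phi\|_{L^2}$, which is the asserted $\mathcal{O}(\lambda^{-1})$ bound.

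The main obstacle is precisely this last upgrade step: the naive energy test with $\bld v = \bld u^\perp$ on its own produces only $\|\gradss \bld u^\perp\|_{L^2} = \mathcal{O}(\lambda^{-1/2})$, which is a half-order worse than claimed. The improvement is not a matter of refining that identity but of bringing in a second ingredient, namely the fact that vectors in $\bld V^\perp$ are controlled in the $\gradss$-norm by their divergence, which in turn relies on a right inverse of $\Div$ with an $H^1$ bound — the only non-routine input in the argument.
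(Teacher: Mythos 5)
Your proof is correct and follows the same skeleton as the paper's: test \eqref{eq-decomp} with $\bld u^0$ to kill the divergence-free part, then test with $\bld u^\perp$ and exploit the fact that elements of $\bld V^\perp$ are controlled in $H^1$ by their divergence. The difference is in how that last fact enters. The paper bounds the right-hand side by $\|\phi\|_{L^2}\|\bld u^\perp\|_{H^1}$, keeps both terms of the energy identity, and imports the estimate $\|\bld u^\perp\|_{H^1}\preceq \|\Div \bld u^\perp\|_{L^2}$ as a citation (together with Korn) to absorb everything into a single inequality $(\mu/C+\lambda/\beta)\|\bld u^\perp\|_{H^1}\le\|\phi\|_{L^2}$. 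You instead first extract the clean divergence bound $\|\Div\bld u^\perp\|_{L^2}\le \lambda^{-1}\|\phi\|_{L^2}$ by pairing $\phi$ directly with $\Div\bld u^\perp$, and then prove the norm-equivalence on $\bld V^\perp$ inline: Bogovskii gives $\bld w$ with $\Div\bld w=\Div\bld u^\perp$, the $a$-orthogonality $a(\bld u^\perp,\bld u^\perp-\bld w)=0$ kills the grad--div term, and Cauchy--Schwarz closes the argument. Your version is more self-contained (it replaces the reference to the inf-sup/right-inverse-of-divergence result by an explicit two-line derivation) and makes transparent where the extra factor of $\lambda^{-1/2}$ beyond the naive energy estimate comes from; the paper's version is shorter at the cost of outsourcing exactly that step. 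The only cosmetic omission is that at the very end you bound $\|\gradss\bld u^\perp\|_{L^2}$ rather than $\|\bld u^\perp\|_{H^1}$, but Korn's inequality (which you already invoke for the first claim) converts one into the other.
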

\begin{proof}
  Taking $\bld v^0=\bld u^0$ in equation \eqref{eq-decomp}, we get
    \[
      a(\bld u^0,\bld u^0) = (\bld f,\bld u^0)
      =(\nabla \phi, \bld u^0)  =
      (-\phi, \Div(\bld u^0)) = 0.
    \] 
    Hence, $\bld u^0=0$.
    
  On the other hand we obtain
  \begin{align*}
    ( 2\mu\,\gradss(\bld u^\perp), \gradss(\bld u^\perp))
    + ( \lambda\,\Div \bld u^\perp, \Div \bld
    u^\perp) &= \bld f(\bld u^\perp) \\
    = -(\phi,\Div \bld u^\perp) &\leq \Vert \phi \Vert_{L^2(\Omega)} \Vert \bld u^\perp \Vert_{H^1(\Omega)}. 
  \end{align*}
  From Korn's inequality
  $
  \Vert \bld u^\perp \Vert_{H^1(\Omega)}^2 
 \le C
  ( 2 \gradss(\bld u^\perp), \gradss(\bld u^\perp)),
  $
  and an estimate on the $H^1$ norm of functions in $\bld V^\perp$, 
   $\Vert \bld u^\perp \Vert_{H^1(\Omega)} \leq \beta \Vert \Div \bld u^\perp \Vert_{L^2(\Omega)}$, where 
$C$ is the constant for the Korn's
   inequality and $\beta$ is the inf-sup constant of a corresponding Stokes problem, cf. \cite[Corollary 3.47]{john2016finite},
   we hence have 
   $$ 
(\frac{\mu}{C}+\frac{\lambda}{\beta}) \Vert \bld u^\perp
     \Vert_{H^1(\Omega)} \le \Vert \phi \Vert_{L^2(\Omega)},
   $$
   from which we conclude the statement.
\end{proof}
The previous characterization does not automatically carry over to discretization schemes.

\begin{definition}
  \label{def:gob}
  We denote a space discretization for the linear elasticity equation
  which fulfills an analogue to Theorem \ref{lem:gradient:field:robustness} also discretely as \emph{gradient-robust}, i.e., gradient-robustness means for
  a discretization  of \eqref{eqns} that in the limit $\lambda \to \infty$
  it holds $\bld u_h = \mathcal{O}(\lambda^{-1})$.
\end{definition}

\begin{remark}[Gradient robustness for the Stokes limit]
  \emph{Gradient-robustness} is directly related to the concept of \emph{pressure robustness} in the Stokes case.
  Actually, a gradient-robust discretization
  for the linear elasticity problem \eqref{eqns} is 
  asymptotic preserving (AP) in the sense of 
  \cite{MR1718639} such that for $\lambda \to \infty$
  the space discretization converges on every (fixed) grid
  to a pressure-robust space discretization of the
  (formal) Stokes problem
  \eqref{eq:stokes}.
\end{remark}
  It is known that the standard Taylor-Hood discretization is not {pressure-robust}. However, several discretizations  for the Stokes problem exists that are {pressure-robust} \cite{JLMNR2017} or can be made {pressure-robust} by a suitable modification \cite{Link14}.
We demonstrate the consequences for the linear elasticity problem in the following, 
where the forcing $\bld f$ is a gradient field.
\begin{example}\label{ex:2}
We take $\bld f = \nabla \phi$ with $\phi = x^6 + y^6$.
and (homogeneous) Dirichlet boundary conditions so that
it holds $\bld u \to \bld 0$ in the asymptotic limit $\lambda \to \infty$.
\end{example}
We now compare the different methods on a fixed grid
(or a couple of fixed grids) and we investigate
the norm of the solution $\bld u$ with respect to $\lambda \to \infty$.
For gradient-robust methods this norm should vanish as $\mathcal{O}(\lambda^{-1})$ independent of $h$.
For methods that are not gradient-robust the limit 
will be $\mathcal{O}(h^k)$ for $\lambda \to \infty$
depending on the mesh size $h$ and the order $k$.
The results for the methods \eqref{eq:naive} and \eqref{eq:mdstokes:1}--\eqref{eq:mdstokes:2} for Example \ref{ex:2}, are shown in Fig. \ref{fig:M2grad}. While \eqref{eq:naive} behaves well as $\Vert \nabla \bld u_h \Vert_{\Omega}$ goes to zero with $\lambda^{-1}$ essentially independent of $h$, for the method in  \eqref{eq:mdstokes:1}--\eqref{eq:mdstokes:2} we observe a lower bound for $\Vert \nabla \bld u_h \Vert_{\Omega}$ that depends on the mesh.

\begin{figure}
 \includegraphics[height=0.255\textwidth]{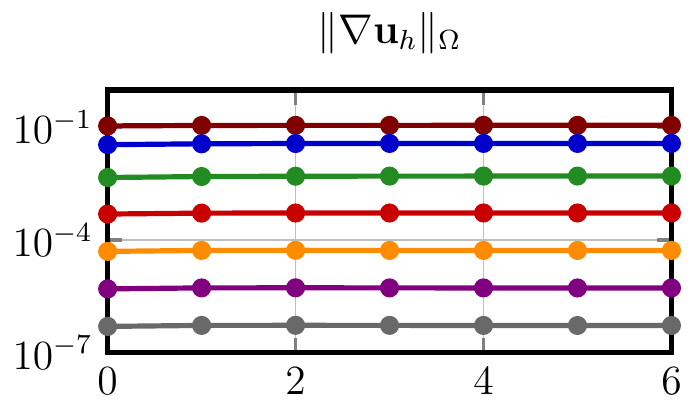}
 \hspace*{-0.02\textwidth}
 \includegraphics[height=0.255\textwidth]{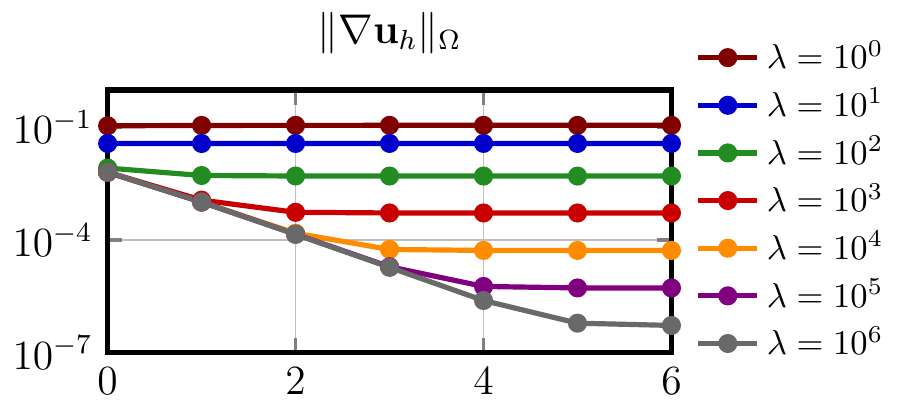}
 \caption{Norm of discrete solution for methods in \eqref{eq:naive} (left) and \eqref{eq:mdstokes:1}--\eqref{eq:mdstokes:2} (right), $k=2$, under mesh refinement ($x$-axis: refinement level $L$) and different values of $\lambda$ for Example \ref{ex:2}.} 
 \label{fig:M2grad}
 \hspace*{-0.02\textwidth}
\end{figure}

As a conclusion of the numerical examples, let us summarize that both basic
methods that we considered here, the discretization \eqref{eq:naive} and
the Taylor-Hood based method in
\eqref{eq:mdstokes:1}--\eqref{eq:mdstokes:2} are not satisfactory. While
\eqref{eq:naive} seems to be \emph{gradient-robust} it is not free of volumetric locking while the behavior of the Taylor-Hood based method in \eqref{eq:mdstokes:1}--\eqref{eq:mdstokes:2} has the exact opposite properties.

\begin{remark}
  In the first example we considered a divergence-free forcing. The observations stay essentially the same if more general forcings are considered there, e.g. if the solutions of both examples are superimposed.
\end{remark}

\subsection{The basic method on 
barycentric refined meshes
}
A comparably simple discretization scheme that is known to be pressure
robust for the Stokes limit is the Scott-Vogelius element \cite{Scott85,Vogelius83}, which is the
classical Taylor-Hood discretization with a discontinuous pressure space.
However, this discretization is known to be LBB-stable (and hence free of volume-locking) only for special triangulations or sufficiently high orders. Applications of this element
to linear elasticity have been made for example in \cite{MR2871869}. Let us consider the last two examples again, but on every level we apply a barycentrical refinement 
of the original mesh by connecting all vertices of the mesh cell with
the barycenter of this mesh cell, cf. Figure \ref{fig:bary}.
\begin{figure}
  \begin{center}
    \begin{tikzpicture}[scale = 0.7]
        \def\dw{0.025}
        \foreach \i in {0,...,3} {
          \foreach \j in {0,...,3} {
            \draw [] (\i,\j) -- (\i,1+\j);
            \draw [] (\i,\j) -- (\i+1,\j);
            \draw [] (\i,\j) -- (\i+0.33333,\j+0.33333);
            \draw [] (\i+0.33333,\j+0.33333) -- (\i+1,\j);
            \draw [] (\i+0.33333,\j+0.33333) -- (\i,\j+1);
            \draw [] (\i+1,\j) -- (\i,\j+1);
            \draw [] (\i+0.66666,\j+0.66666) -- (\i,\j+1);
            \draw [] (\i+0.66666,\j+0.66666) -- (\i+1,\j+1);
            \draw [] (\i+0.66666,\j+0.66666) -- (\i+1,\j);
          }
        }
        \draw [] (4,0) -- (4,4) -- (0,4);
    \end{tikzpicture}
    \vspace*{-0.3cm}
  \end{center}    
  \caption{Barycentric-refined triangular mesh on the unit square with refinement level $L=0$.} 
  \label{fig:bary}
  \hspace*{-0.02\textwidth}
\end{figure}
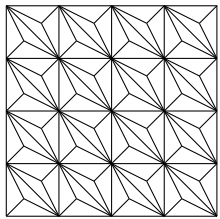
If we apply the basic method \eqref{eq:naive} in this case with $k\geq 2$ we have a gradient-robust scheme which at the same time has a sufficiently large discretely divergence-free subspace $\bld V_h^0$ to be volume-locking free. The results are given in Figure \ref{fig:PS} and are consistent with these expectations.

\begin{figure}
  \begin{center}
    \includegraphics[height=0.255\textwidth]{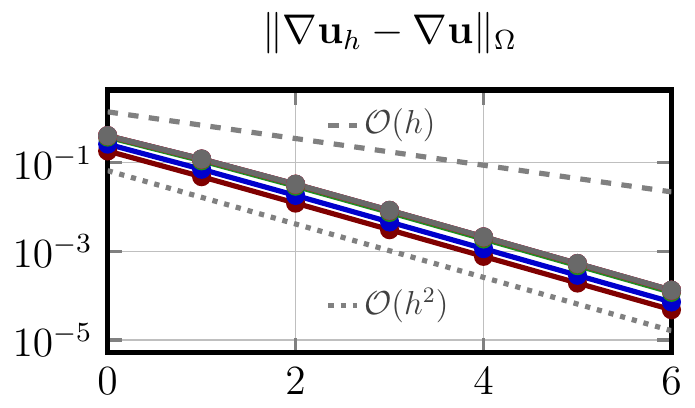}
    \hspace*{-0.02\textwidth}
    \includegraphics[height=0.255\textwidth]{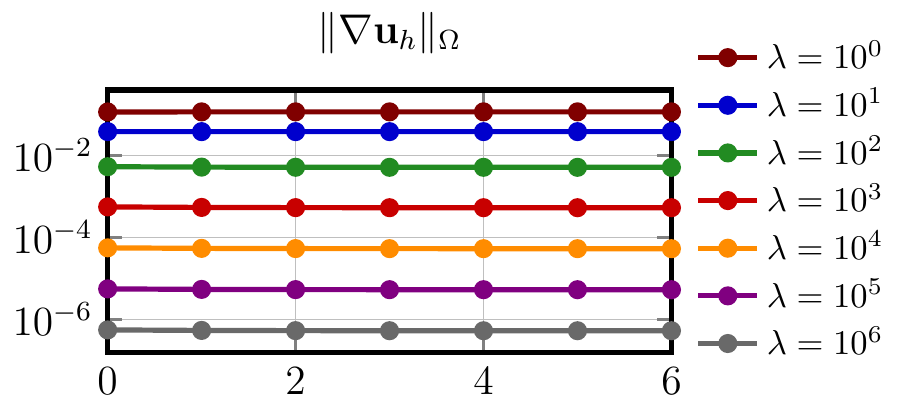}
  \end{center}    
  \caption{Discretizaton error for Example \ref{ex:1} (left) and norm of discrete error for Example \ref{ex:2} (right) for the method \eqref{eq:naive}, $k=2$, on a barycentric-refined mesh under mesh refinement ($x$-axis: refinement level $L$) and different values of $\lambda$ for Example \ref{ex:2}.} 
  \label{fig:PS}
  \hspace*{-0.02\textwidth}
\end{figure}

\section{$H(\Div)$-conforming HDG Discretization and Analysis}
\label{sec2:disc}
In the remainder of this paper we consider a special class of discretizations for linear elasticity: $H(\Div)$-conforming HDG discretizations where we also keep track of the volume-locking and gradient-robustness property of the method. In Subsections \ref{sec:hdivhdg:prelim} -- \ref{sec:hdivhdg:scheme} we introduce preliminaries, notation and the numerical method and analyse it with respect to quasi-optimal error estimates and volume-locking in Subsection \ref{sec:hdivhdg:errorest}. The prove of gradient-robustness is carried out in Subsection \ref{sec:hdivhdg:gradrobhdiv}. 
Numerical results support these theoretical findings in
Subsection \ref{sec:num1}.
In the subsequent section, Section \ref{sec:relaxedhdiv}, we consider a (more efficient) modified scheme which is volume-locking free, but is gradient-robust only after a simple modification.

\subsection{Preliminaries}\label{sec:hdivhdg:prelim}
Let $\Eh=\{F\}$ be the collection of facets (edges in 2D, faces in 3D) in $\Oh$.
We distinguish functions with support only on facets indicated by a subscript $F$ and those
with support also on the volume elements which is indicated by a subscript $T$. Compositions of both types
are used for the HDG discretization of the displacement and indicated by underlining, 
$\compU = (\bld u_T ,\bld u_F)$.
On each simplex $T$, 
we denote the tangential component of a vector $\bld v_T$ on a facet $F$
by $(\bld v_T)^t = \bld v_T-(\bld v_T\cdot \bld n)\bld n$, where $\bld n$ is the unit normal vector on $F$.
Furthermore, we denote the compound exact solution as $\compU:=(\bld u, \bld u^t)$, and introduce the composite space of sufficiently smooth functions
\begin{align}
\label{comp-space}
\compUspace: = [H^2_0(\Omega)]^d\times [H^1_0(\Eh)]^d. 
\end{align}

\subsection{Finite elements}\label{sec:hdivhdg:fem}
We consider an HDG method which approximates the displacement on the mesh 
$\Oh$ using an $H(\Div)$-conforming space and the {\it tangential } component of the displacement on 
the mesh skeleton $\Eh$ with a DG facet space given as follows:

\begin{subequations}
\label{space}
\begin{align}
\label{space-1}
\Sh : =&\; \{\bld v_T\in \prod_{T\in\Oh}[\pol^{k}(T)]^d: \;\;
\jmp{\bld v_T\cdot\bld n}_F = 0 \,\forall F\in\Eh\}\subset H_0(\Div,\Omega),\\
 M_{h} := &\;\{\bld v_F\in \prod_{F\in\Eh} M_k(F): \;\;
\bld v_F\cdot\bld n = 0 \,\forall F\in\Eh, \;\;
\bld v_F = 0 \,\;\;\forall F\subset \partial\Omega\},
\end{align}
where $\jmp{\cdot}$ is the usual jump operator, 
$\pol^k$ the space of polynomials up to degree $k$, and
\begin{align*}
 M_k(F):=\left\{
 \begin{tabular}{l l}
 $[\pol^{0}(F)]^3\oplus \bld x\times [\pol^{0}(F)]^3$ & if $k = 1$ and $d=3$,
  \vspace{.2cm}
\\
 $[\pol^{k-1}(F)]^d$ & else.
 \end{tabular}
 \right.
\end{align*}
Note that functions in $M_h$ are defined only on the mesh skeleton 
and have normal component {\it zero}.
%

%

To further simplify notation, we denote the composite space as
\[
 \compUhspace : = \Sh\times {M}_h.
\]
\end{subequations}

\subsection{The numerical scheme}\label{sec:hdivhdg:scheme}
We introduce the $L^2$ projection onto $M_k(F)$
$\Pi_M$:
 \[
  \Pi_M: [L^2(F)]^d\rightarrow M_k(F), 
  \quad \int_F (\Pi_M f) v \, \mathrm{ds} = \int_{F}f\,v\, \mathrm{ds} \quad \forall v\in M_k(F).
 \]
 Then, for all $\compU, \compV\in \compUhspace$, we introduce the bilinear and linear forms 
\begin{subequations}
\label{bilinearforms}
 \begin{align}
  \label{elas-1}
  a_{h}(\compU, \compV) :=&\; a_{h}^{\mu}(\compU, \compV) + a_{h}^{\lambda}(\compU, \compV)   \\
 \label{elas-2}
  a_{h}^{\mu}(\compU, \compV) :=&\; 
  \sum_{T\in\Oh}\int_T2\mu\,\gradss(\bld u_T):\gradss(\bld v_T)
  \dx\\
&\;  -\int_{\partial T}2\mu\,\gradss(\bld u_T)\bld n\cdot \jmp{\compV^t}\ds 
-\int_{\partial T}2\mu\,\gradss(\bld v_T)\bld n\cdot \jmp{\compU^t}\ds\nonumber\\
&\;  + \int_{\partial T}\mu\frac{\alpha}{h}\Pi_M\jmp{\compU^t}\cdot \Pi_M\jmp{\compV^t}\ds,
                                                                                    \nonumber\\
   a_h^{\lambda}(\compU, \compV) :=&\;
\sum_{T\in\Oh}\int_T  \lambda\,\Div(\bld u_T)\Div(\bld v_T)
                                     \dx,
 \label{elas-2b}   \\                       
  \label{elas-3}
f(\compV) := &\;   \sum_{T\in\Oh}\int_{T}\bld f\cdot\bld v_T\dx.
 \end{align}
 where 
 $\jmp{\compU^t}= (\bld u_T)^t-\bld u_F$ is the (tangential) jump between element interior and facet unknowns, and 
 $\alpha = \alpha_0 k^2$ with $\alpha_0$ a sufficiently large positive constant.
\end{subequations}
 
The numerical scheme then reads: Find $\compUh \in \compUhspace$ such that 
\begin{align}\tag{S1}
\label{scheme}
a_h(\compUh, \compVh) = f(\compVh), \quad \forall \compVh \in \compUhspace.
\end{align}

\subsection{Error estimates}\label{sec:hdivhdg:errorest}
We write
\[
 A\preceq B
\]
to indicate that there exists a constant $C$, independent of the mesh size $h$, the 
Lam\'e parameters $\mu$ and $\lambda$, 
and the numerical solution, such that 
$A\le CB.$

Denote the space of rigid motions
$$RM(T)=\{\bld a+B \,\bld x:\;\;\bld a\in \bR^d, B\in S_d\},$$
where $S_d$ is the space of anti-symmetric $d\times d$ matrices.
We observe that the tangential trace on a facet $F$ of any function in $RM(T)$ 
is a constant in 2D, and lies in the space $M_1(F)$ in 3D.
Hence,  there holds 
\begin{align}
\label{inclusion} 
\bld v^t|_F \in M_k(F), \quad\quad \forall \bld v\in RM(T).
\end{align}
The above property is the key to prove coercivity of the bilinear form \eqref{elas-1}.

We use the following projection $\Pi_{RM}$ from $[H^1(T)]^d$ onto $RM(T)$ \cite{Brenner03}:
\begin{alignat*}{2}
\int_T \Pi_{RM} \bld u \dx =&\;  \int_T \bld u \dx,\\
 \int_T \curls\, (\Pi_{RM} \bld u)\dx =&\;  
 \int_T \curls\,\bld u \dx,
\end{alignat*}
where $\curls\, \bld u$ is the anti-symmetric part of the gradient of $\bld u$. 
Following \cite{Brenner03} this projection operator has the approximation properties
\begin{subequations}
\label{proj-rm}
\begin{align}
\label{proj-rm-1}
 ||\grads(\bld u-\Pi_{RM}\bld u)||_{T} \preceq &\; 
  ||\gradss(\bld u)||_{T},\\
\label{proj-rm-2}
||\bld u-\Pi_{RM}\bld u||_{T}\preceq&\;
    h_T||\grads(\bld u-\Pi_{RM}\bld u)||_{T}.
\end{align}
\end{subequations}


Denoting the following  (semi)norms
\begin{subequations}
\label{norms}
\begin{equation*}
  \|\compV\|_{\mu,h} := \mu^\frac12 \|\compV\|_{1,h},\quad
  \|\compV\|_{\mu,*,h} := \mu^\frac12 \|\compV\|_{1,*,h},\quad
  \|\compV\|_{\mu,**,h} := \mu^\frac12 \|\compV\|_{1,**,h}, 
\end{equation*}
\begin{align}
 \label{norm-energy}
\|\compV\|_{1,h} := &\;
 \left(
\sum_{T\in\Oh} 2\|\gradss \bld v_T\|^2_T
+\frac{2}{h}\|\Pi_M\jmp{\compV^t}\|^2_{\partial T}
 \right)^{1/2},\\
 \label{norm-energy2}
\|\compV\|_{1,*,h} := &\;
\Big(
\|\compV\|_{1,h}^2+
\sum_{T\in\Oh}
2 h
\|\gradss(\bld v_T)\bld n\|^2_{\partial T}
 \Big)^{1/2},\\
 \label{norm-energy3}
 \|\compV\|_{1,**,h} := &\;
\Big(
\|\compV\|_{1,*,h}^2+\sum_{T\in\Oh}
\frac{2}{h}\|\jmp{\compV^t}\|^2_{\partial T}
 \Big)^{1/2}.
\end{align}
\end{subequations}
We also denote the $H^{s}$-norm  on $\Omega$ as $\|\cdot\|_{s}$, and when 
$s=0$, we simply denote $\|\cdot\|$ as the $L^2$-norm on $\Omega$.

To derive optimal $L^2$ error estimates, we shall assume the following full $H^2$-regularity
 \begin{align}
 \label{dual}
\mu \|\bld \phi\|_{2}
+
\lambda \|\Div\,\bld \phi\|_{1}
\preceq \|\bld \theta\|
 \end{align}
  for the dual problem with any source term $\bld \theta\in [L^2(\Omega)]^d$:
 \begin{subequations}
 \label{dual-eq}
 \begin{align}
- \Div\left(2\mu\gradss \bld \phi\right)  -  \grads\left(\lambda\,\Div\bld \phi\right)  =&\; \bld \theta
\quad \text{ in }  \Omega,\\
  \bld\phi =&\; \bld 0 \quad \text{ on }   \partial\Omega.
 \end{align}
 \end{subequations}
The estimate \eqref{dual} holds on convex polygons \cite{BrennerSung92}.

We have the following estimates.
\begin{theorem}
\label{thm:energy}
 Assume $k\ge 1$ and the regularity $\bld u\in [H^{k+1}(\Omega)]^d$.
 Let $\compUh\in \compUhspace$ be the numerical solution to the scheme \eqref{scheme}.
 Then, for sufficiently large stabilization parameter $\alpha_0$, 
 the following estimate holds
 \begin{subequations}
 \label{est}
 \begin{alignat}{2}
 \label{est-1}
\|\compU-\compUh\|_{\mu,h}
\preceq&\; \mu^{1/2} h^{k}
 \|\bld u\|_{k+1},\\
 \label{est-1b}
\|\Div(\bld u-\bld u_T)\|
\preceq &\;(\mu/\lambda)^{1/2} h^{k}
 \|\bld u\|_{k+1} + h^{k}\|\Div\,\bld u\|_{k}.
 \end{alignat}
Moreover, under the regularity assumption \eqref{dual}, the following estimate holds
 \begin{alignat}{2}
 \label{est-2}
 \|\bld u-\bld u_T\|
\preceq 
h^{k+1} \|\bld u\|_{k+1}.
 \end{alignat}
  \end{subequations}

\end{theorem}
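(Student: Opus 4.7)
I would follow the Strang-type recipe for HDG with projected-jumps stabilization: (i) consistency, (ii) coercivity of $a_h$ on $\compUhspace$ uniformly in $\lambda$, (iii) boundedness of $a_h$ between a weak and a strong norm, (iv) quasi-optimal interpolation in the $\|\cdot\|_{\mu,**,h}$ norm, and (v) an Aubin--Nitsche duality argument for \eqref{est-2}. Consistency is immediate: since $\bld u\in [H^2_0(\Omega)]^d$, we have $\jmp{\compU^t}=0$ on each facet, which kills both the penalty term and the term $\int_{\partial T} 2\mu \gradss(\bld v_T)\bld n\cdot\jmp{\compU^t}\ds$; an element-wise integration by parts, combined with $\bld v_T \in H(\Div,\Omega)$ and $\bld v_F\cdot\bld n=0$, recovers $f(\compVh)$ from the strong form \eqref{eqns}.

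Coercivity is the heart of the argument and relies crucially on \eqref{inclusion}. Since $\gradss(\bld v_T)\bld n$ is a polynomial of degree at most $k-1$ on each facet whose tangential part lies in $M_k(F)$ --- this is precisely why $M_1(F)$ was augmented by $\bld x\times [\pol^0(F)]^3$ in the 3D lowest-order case --- one may insert $\Pi_M$ in the consistency term $\int_{\partial T}2\mu\gradss(\bld v_T)\bld n\cdot\jmp{\compVh^t}\ds$, which is then absorbed by the stabilization $\mu\alpha/h\,\|\Pi_M\jmp{\compVh^t}\|^2_{\partial T}$ via Cauchy--Schwarz, an inverse trace estimate, and Young's inequality, provided $\alpha_0$ is sufficiently large. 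Since $a_h^\lambda(\compVh,\compVh)\geq 0$, this yields $a_h(\compVh,\compVh)\gtrsim \mu\|\compVh\|_{1,h}^2$ uniformly in $\lambda$.

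For the energy estimate I would introduce the composite interpolant $\Pi\compU := (\Pi_{\mathrm{BDM}}\bld u, \Pi_M\bld u^t)$, where $\Pi_{\mathrm{BDM}}$ is the BDM projection into $\Sh$ satisfying the commuting property $\Div\Pi_{\mathrm{BDM}}\bld u = \pi_{k-1}\Div\bld u$ with $\pi_{k-1}$ the element-wise $L^2$-projection onto $\pol^{k-1}$. A Brenner-style broken Korn argument using $\Pi_{RM}$ and \eqref{proj-rm} delivers $\|\compU-\Pi\compU\|_{\mu,**,h}\preceq \mu^{1/2}h^k\|\bld u\|_{k+1}$, and the commuting property gives $\|\Div(\bld u-\Pi_{\mathrm{BDM}}\bld u)\|\preceq h^k\|\Div\bld u\|_k$. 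Boundedness of $a_h$ (verified using standard trace inequalities to move the boundary terms into the $*$-part of the $**$-norm), combined with coercivity and Galerkin orthogonality, yields
\[
\|\Pi\compU-\compUh\|_{\mu,h}^2 + \lambda\|\Div\Pi_{\mathrm{BDM}}\bld u-\Div\bld u_T\|^2 \preceq \mu h^{2k}\|\bld u\|_{k+1}^2 + \lambda h^{2k}\|\Div\bld u\|_k^2;
\]
the triangle inequality with the interpolation bounds then delivers \eqref{est-1} and \eqref{est-1b}.

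For \eqref{est-2} I would use Aubin--Nitsche with $\bld\theta := \bld u-\bld u_T$ as the source in \eqref{dual-eq}, producing $\compS=(\bld\phi,\bld\phi^t)$ and composite interpolant $\Pi\compS = (\Pi_{\mathrm{BDM}}\bld\phi, \Pi_M\bld\phi^t)$. Consistency and Galerkin orthogonality reduce $\|\bld u-\bld u_T\|^2$ to $a_h(\compU-\compUh,\compS-\Pi\compS)$. The $a_h^\mu$-contribution gains two extra powers of $h$ via $\mu\|\bld\phi\|_2\preceq\|\bld\theta\|$. For $a_h^\lambda$ the orthogonality of $(I-\pi_{k-1})$ rewrites the contribution as $\lambda\sum_T\int_T(I-\pi_{k-1})\Div\bld u\cdot(I-\pi_{k-1})\Div\bld\phi\,\dx$, bounded by $\lambda\cdot h^k\|\Div\bld u\|_k\cdot h\|\Div\bld\phi\|_1\preceq h^{k+1}\|\bld u\|_{k+1}\|\bld\theta\|$ via \eqref{dual}. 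The main obstacle is the uniform-in-$\lambda$ coercivity: the $\Pi_M$-insertion afforded by \eqref{inclusion} and the Brenner broken-Korn argument must be orchestrated so that the stability constant depends on neither $\mu$ nor $\lambda$, and the duality step requires exploiting the $L^2$-orthogonality of $(I-\pi_{k-1})$ rather than the divergence bound \eqref{est-1b} directly in order to keep the $\lambda$-dependence under control.
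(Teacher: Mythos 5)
Your overall architecture (consistency, coercivity via the $\Pi_M$-insertion and \eqref{inclusion}, norm equivalence through $\Pi_{RM}$, the BDM interpolant, Galerkin orthogonality, Aubin--Nitsche with \eqref{dual}) is the same as the paper's, and your treatment of the $a_h^\lambda$-contribution in the duality step --- rewriting it as $\lambda\sum_T\int_T(I-\Pi_Q)\Div\bld u\,(I-\Pi_Q)\Div\bld\phi\,\dx$ --- is exactly the paper's trick. But there is a genuine gap in your energy estimate. Your claimed intermediate bound
\begin{equation*}
\|\Pi\compU-\compUh\|_{\mu,h}^2+\lambda\|\Div(\Pi_{\mathrm{BDM}}\bld u-\bld u_T)\|^2\preceq \mu h^{2k}\|\bld u\|_{k+1}^2+\lambda h^{2k}\|\Div\bld u\|_k^2
\end{equation*}
carries the term $\lambda h^{2k}\|\Div\bld u\|_k^2$, which after the triangle inequality leaves a spurious $\lambda^{1/2}h^k\|\Div\bld u\|_k$ contribution in $\|\compU-\compUh\|_{\mu,h}$. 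That does \emph{not} prove \eqref{est-1}, whose right-hand side is $\mu^{1/2}h^k\|\bld u\|_{k+1}$ with no $\lambda$-dependence whatsoever; what you obtain is instead the (non-locking-free) type of bound that the paper proves for the \emph{relaxed} scheme in Theorem \ref{thm:2}. The term arises because you estimated the cross term $a_h^\lambda(\compU-\Pi\compU,\compUh-\Pi\compU)$ by Cauchy--Schwarz.

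The missing observation --- which is the essential locking-free mechanism of the paper's Step 4 --- is that this cross term vanishes \emph{identically}: by the commuting-diagram property you yourself state, $\Div(\bld u-\Pi_{\mathrm{BDM}}\bld u)=(I-\Pi_Q)\Div\bld u$ is $L^2(T)$-orthogonal to $Q_h=\Div\Sh$, while $\Div(\bld u_T-\Pi_{\mathrm{BDM}}\bld u)\in Q_h$, so $a_h^\lambda(\compU-\Pi\compU,\compUh-\Pi\compU)=0$. With this, the right-hand side of the intermediate bound is just $\mu h^{2k}\|\bld u\|_{k+1}^2$ and \eqref{est-1}, \eqref{est-1b} follow as you describe; since your duality argument also relies on $\|\compU-\compUh\|_{\mu,*,h}\preceq\mu^{1/2}h^k\|\bld u\|_{k+1}$, the gap propagates to \eqref{est-2} until this is fixed. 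A minor side remark: the augmentation of $M_1(F)$ by $\bld x\times[\pol^0(F)]^3$ in 3D is needed so that tangential traces of \emph{rigid motions} lie in $M_1(F)$ (the norm-equivalence step via $\Pi_{RM}$), not for inserting $\Pi_M$ in front of $\gradss(\bld v_T)\bld n$, whose tangential trace for $k=1$ is constant and already lies in $[\pol^0(F)]^3$.
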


\begin{remark}[Volume-locking-free estimates]
From the energy estimates \eqref{est-1}, we get that
\[
 \sum_{T\in\Oh} \|\gradss (\bld u-\bld u_{h,T})\|^2_T
+\frac{1}{h}\|\Pi_M\jmp{(\compU-\compUh)^t}\|^2_{\partial T}
 \preceq h^{2 k}
 \|\bld u\|_{k+1}^2,
\]
with the hidden constant independent of the Lam\'e constants $\lambda$ and $\mu$.
This observation also holds for the $L^2$-norm estimate \eqref{est-2}.
Hence, the estimates are free of volume-locking when $\lambda\rightarrow +\infty$.
\end{remark}

\begin{proof}
We proceed in the following five steps. 

\underline{Step 1 (Coercivity):} 
Observing the definition \eqref{elas-1} for the bilinear form $a_h^\mu(\cdot,\cdot)$, 
and applying the Cauchy-Schwarz inequality combined with trace-inverse inequalities, we obtain, cf. \cite[Lemma 2]{FL_JSC_2018}, for sufficiently large $\alpha$, 
\begin{equation} \label{eq:coercivity}
\|\compVh\|_{\mu,h}^2\preceq a_h^{\mu}(\compVh,\compVh) \quad \forall \compVh\in \compUhspace.
\end{equation}

\underline{Step 2 (Norm equivalence):} 
By property \eqref{inclusion}, we have 
$\Pi_M(\Pi_{RM}\bld v_T)^t = (\Pi_{RM}\bld v_T)^t$.
Hence, for any interior facet $F\in \Eh\backslash\partial\Omega$ and 
any function $\compV\in \compUspace+\compUhspace$, we have
\begin{align*}
 \|\jmp{\compV^t}\|_{F}\le&\;
  \|\Pi_M\jmp{\compV^t}\|_{F}+
   \|\bld v_T^t-\Pi_M\bld{v}_T^t\|_{F}\\
   \le &\;
   \|\Pi_M\jmp{\compV^t}\|_{F}+
   \|(\bld v_T-\Pi_{RM}\bld v_T)^t-\Pi_M(\bld{v}_T-\Pi_{RM}\bld v_T)^t\|_{F}\\
   \preceq
   &\;
   \|\Pi_M\jmp{\compV^t}\|_{F}+
   \|\bld v_T-\Pi_{RM}\bld v_T\|_{F}.
\end{align*}
Using the trace theorem and approximation properties \eqref{proj-rm} of the projector $\Pi_{RM}$, we get 
\begin{align*}
    \|\bld v_T-\Pi_{RM}\bld v_T\|_{F}^2
    \preceq&\;
    \sum_{T\in \mathcal{T}(F)}(h|\bld v_T-\Pi_{RM}\bld v_T|^2_{1,T}+h^{-1}\|\bld v_T-\Pi_{RM}\bld v_T\|^2_T)\\    
    \preceq&\;
   h\, \|\gradss \bld v_T\|^2_{\mathcal{T}(F)},
\end{align*}
where $\mathcal{T}(F)$ is the set of the two simplexes meeting $F$.
Hence, 
\begin{align}
\label{jump-est}
 \|\jmp{\compV^t}\|_{F}\le&\;
   \|\Pi_M\jmp{\compV^t}\|_{F} +   h^{1/2}\, \|\gradss \bld v_T\|_{\mathcal{T}(F)}
   \quad\quad \forall \compV\in \compUspace+\compUhspace.
\end{align}
Recally the norms defined in \eqref{norms},
this directly implies
\begin{subequations}
 \label{norm-eq}
\begin{align}
 \label{norm-eq1}
 \|\compV\|_{\mu,**,h}\preceq
  \|\compV\|_{\mu,*,h}\quad\quad\forall \compV\in \compUspace+\compUhspace.
\end{align}
On the other hand, by trace and inverse inequalities, we have, cf. \cite[Lemma 1]{FL_JSC_2018},
\begin{align}
 \label{norm-eq2}
 \|\compVh\|_{\mu,*,h}\preceq
  \|\compVh\|_{\mu,h}\quad\quad\forall \compVh\in \compUhspace.
\end{align}
\end{subequations}

\underline{Step 3 (Boundedness):} 
Applying the Cauchy-Schwarz inequality on the bilinear form $a_h(\cdot,\cdot)$, we obtain
using the estimate \eqref{jump-est}
\begin{align}
\label{bdd-est}
a_h^\mu(\compV,\compW)
\le&\; 
\|\compV\|_{\mu,**,h}
\|\compW\|_{\mu,**,h}
\preceq\;\|\compV\|_{\mu,*,h}
\|\compW\|_{\mu,*,h}\quad\quad
 \forall \compV,\compW\in \compUspace+\compUhspace.
\end{align}


\underline{Step 4 (Galerkin orthogonality, BDM interpolation): }
Galerkin orthogonality yields 
$a_h(\compU, \compVh) = f(\compVh)$ for all $\compVh\in \compUhspace$. 
Hence, 
$a_h(\compU-\compUh, \compVh) = f(\compVh)$.
We estimate the error by first applying a triangle inequality to split
$$
\|\compU-\compUh\|_{\mu,h} \leq \|\compVh-\compU\|_{\mu,h} + \|\compUh-\compU\|_{\mu,h},
$$
where we choose $\compVh = (\Pi_V \bld u, \Pi_M \bld u)$ where $\Pi_V$ is the classical BDM interpolator, \cite[Proposition 2.3.2]{BoffiBrezziFortin13}. 
We note that the interpolation operator $\Pi_V$ has, as a consequence of its commuting diagram property, that 
$$
 \int_{\Omega} \Div (\Pi_V \bld u - \bld u) q_h \dx =  \int_{\Omega} (\Pi_Q \Div \bld u - \Div \bld u) q_h \dx = 0 \quad \forall~ q_h \in Q_h,
 $$
where $\Pi_Q$ is the $L^2$ projection into $Q_h = \prod_{T\in\Oh}\pol^{k-1}(T) = \Div \Sh$.
Hence,
\begin{align*}
&  \|\compUh-\compVh\|_{\mu,h}^2
  +\lambda \|\Div(\bld u_T-\bld v_T)\|^2\\
  & \preceq a_h^{\mu}(\compUh-\compVh, \compUh - \compVh)
  +\lambda \|\Div(\bld u_T-\bld v_T)\|^2\\
& = a_h(\compUh-\compVh, \compUh - \compVh) 
 = a_h(\compU-\compVh, \compUh - \compVh) \\
  &= a_h^{\mu}(\compU-\compVh,\; \compUh - \compVh)
    + \underbrace{a_h^{\lambda}(\compU-\compVh,\; \compUh - \compVh)}_{=0}
 \\[-1ex]
&\preceq  \|\compU-\compVh\|_{\mu,\ast,h} \|\compUh-\compVh\|_{\mu,\ast,h} 
\preceq  \|\compU-\compVh\|_{\mu,\ast,h} \|\compUh-\compVh\|_{\mu,h}.
\end{align*}
This implies
\begin{align}
\label{iii}
\|\compU-\compUh\|_{\mu,h}
  +\lambda^{1/2}\|\Div(\bld u_T-\bld v_T)\|
  \preceq  \|\compU-\compVh\|_{\mu,\ast,h}
  \preceq  \mu^{1/2} h^k\|\bld u\|_{k+1},  
\end{align}
where the last estimate follows from usual Bramble-Hilbert-type arguments, cf. \cite[Proposition 2.3.8]{Lehrenfeld:10} 
for a proof in an almost identical setting. 
The estimate \eqref{est-1} follows directly from \eqref{iii}, and
the estimate \eqref{est-1b} follows from \eqref{iii} and the triangle inequality:
\begin{align*}
 \|\Div(\bld u-\bld u_T)\|\le &\;
  \|\Div(\bld u_T-\bld v_T)\| +
 \underbrace{\|\Div(\bld u-\bld v_T)\|}_{=\|(I-\Pi_Q)\Div\,\bld u\|}
    \\
    \preceq 
    &\; 
    (\mu/\lambda)^{1/2} h^k\|\bld u\|_{k+1}
    +h^k \|\Div\,\bld u\|_k.  
\end{align*}

\underline{Step 5 (Duality):} 
Let $\bld \phi$ be the solution to the dual problem \eqref{dual-eq} with 
$\bld \theta = \bld u- \bld u_T$ and $\compS = (\bld  \phi, \bld \phi^t) \in \compUspace$.
By symmetry of the bilinear form $a_h(\cdot,\cdot)$ and consistency of the numerical scheme \eqref{scheme}, we have with $\compSh = (\Pi_V \bld  \phi, \Pi_M \bld \phi) \in \compUhspace$
 \begin{align*}
   \|\bld u- \bld u_T\|_\Omega^2 &=
a_h(\compS, \compU-\compUh) = 
a_h(\compS-\compSh, \compU-\compUh)
\\
    & =
a_h^{\mu}(\compS - \compSh, \compU-\compUh) + 
a_h^{\lambda}(\compS - \compSh, \compU-\compUh) \\
    & =
a_h^{\mu}(\compS - \compSh, \compU-\compUh) + 
\lambda \sum_{T\in\Oh} \int_T \underbrace{\Div(\bld \phi- \Pi_V \bld  \phi) \Div(\bld u - \Pi_V\bld u)}_{
=(I-\Pi_Q)\Div\bld \phi\, (I-\Pi_Q)\Div\bld u} \dx \\
& \preceq 
\|\compS-\compSh\|_{\mu,*,h}
\|\compUh-\compU\|_{\mu,*,h}+ \lambda \|(I-\Pi_Q)\Div\bld \phi\|\cdot\|(I-\Pi_Q)\Div\bld u\|\\
&\preceq
          \mu h^{k+1} \|\bld\phi\|_2\|\bld u\|_{k+1}+
          \lambda h^{k+1} \|\Div\bld\phi\|_1\|\Div\bld u\|_{k}
          \\
& \preceq 
          h^{k+1} \|\bld u- \bld u_T\|_\Omega \|\bld u\|_{k+1},
\end{align*}
In the last step we invoked the regularity assumption \eqref{dual}. 
This completes the proof of \eqref{est-2}.
\end{proof}

\subsection{Gradient-robustness} \label{sec:hdivhdg:gradrobhdiv}
In this subsection we want to show that the $H(\Div)$-conforming HDG method in \eqref{scheme} is \emph{gradient-robust}. In this section a splitting into a discretely divergence-free subspace and an orthogonal complement is crucial. To proceed, it seems more natural to work with a DG-equivalent reformulation of the HDG scheme \eqref{scheme} by eliminating the facet unknowns (for analysis purposes only).  In Remark \ref{rem:splittinghdg} below we explain how this translate to the HDG setting.

We introduce the lifting $\mathcal{L}_h: \Sh + [H^2_0(\Omega)]^d \to M_h$ where $\mathcal{L}_h(\bld w_T)$ is the unique function in $M_h$ such that
\[
a_h((\bld w_T, \mathcal{L}_h(\bld w_T), (0, \bld v_F))=0,\quad \forall \bld v_F\in M_h.
\]
For the case of a uniform mesh size $h$, an explicit formula can easily derived yielding
\[
\mathcal{L}_h(\bld w_T)= \{\!\!\{ \Pi_M \bld w_T \}\!\!\}_* - \frac{h}{2 \alpha} \jmp{\gradss \bld w_T \cdot \bld n }_*,
\]
  where $\{\!\!\{ \cdot \}\!\!\}_*$ and $\jmp{\cdot}_*$ are the usual DG average and jump operators.
Then the solution $\underline{\bld u}_h=(\bld u_T, \bld u_F)\in \compUhspace$ to the scheme \eqref{scheme}
satisfies $\bld u_F = \mathcal{L}_h(\bld u_T)$, with $\bld u_T\in \Sh$ being the 
unique function  such that
\begin{equation}\tag{S1-DG}\label{eq:S1-DG}
  \hat a_h(\bld u_T,\bld v_T) = \hat{f}(\bld v_T) \quad \forall \bld v_T\in \Sh,
\end{equation}
where
$\hat a_h(\cdot,\cdot)$ and $\hat f$ are defined on $\Sh$ as follows:
\[
\hat a_h(\bld v_T,\bld w_T) :=  a_h\left((\bld v_T, \mathcal{L}_h(\bld v_T)),(\bld w_T, 0)\right), \quad \hat{f}(\bld w_T) := f((\bld w_T, 0)), ~\bld v_T, \bld w_T\in \Sh.
\]
%
Analogously (with slight abuse of notation) we define a norm on $\Sh$ with
$$
\Vert \bld u_T \Vert_{1,h} := 
\Vert ( \bld u_T, \mathcal{L}_h(\bld u_T) ) \Vert_{1,h}.
$$
Introducing the spaces
\begin{subequations}
  \label{eq:def:decomp}
  \begin{align}
 \Sh^0 := \{ \bld v_T \in \Sh : \Div \bld v_T = 0, \quad \forall T \in \Oh \},
\end{align}
and 
\begin{align}
 \Sh^\perp := \{ \bld v_T \in \Sh : \hat{a}_h(\bld v_T, \bld w_T) = 0, \quad \forall \bld w_T \in \Sh \}.
\end{align}
\end{subequations}
We then split the solution $\bld u_T \in \Sh$ to the scheme \eqref{eq:S1-DG} as $\bld u_T = \bld u_T^0 +\bld u_T^\perp $
where $\bld u_T^0,\bld u_T^\perp \in \Sh$ are the unique solutions to the following equations:
    \begin{subequations} \label{eq:decomp}
  \begin{align}
    \hat a_{h}(\bld u_T^0, \bld v_T^0) & = \hat f(\bld v_T^0) \quad \forall ~ \bld v_T^0 \in \Sh^0, \label{eq:auh0vh0}\\
    \hat a_{h}(\bld u_T^\perp, \bld v_T^\perp)
  & = \hat f(\bld v_T^\perp) \quad \forall ~ \bld v_T^\perp\in \Sh^\perp.
        \label{eq:auhpvhp}
  \end{align}
\end{subequations}
We are now ready to state the following gradient-robustness property of the schemes \eqref{eq:S1-DG} and\eqref{scheme} analogously to the continuous case in Theorem \ref{lem:gradient:field:robustness}.
\begin{theorem}[Gradient-robustness of \eqref{eq:S1-DG}]\label{thm:gradientrobustnessh}
  The scheme \eqref{eq:S1-DG} (and hence scheme \eqref{scheme}) is \emph{gradient-robust}, i.e. for $\bld f=\nabla \phi$, $\phi \in H^1(\Omega)$, the solution $\bld u_T
  = 
  \bld u_T^0
  +
  \bld u_T^\perp
 \in \Sh$ satisfies
  $$
\bld u_T^0= 0 ,\quad 
  \bld u_T^\perp
  = \mathcal{O}(\lambda^{-1}).$$
In particular, for $\lambda \to \infty$ one gets
  $\bld u_T \to \bld 0$.
\end{theorem}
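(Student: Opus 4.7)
The plan is to mirror the continuous proof of Theorem~\ref{lem:gradient:field:robustness} in the discrete setting. The structural feature that makes this transfer clean is that $\Sh \subset H_0(\Div,\Omega)$ and that $\Sh^0$ consists of \emph{pointwise} divergence-free vector fields. Consequently, the identity
\begin{equation*}
(\nabla \phi,\bld v_T)_\Omega = -(\phi,\Div \bld v_T)_\Omega,\qquad \bld v_T \in \Sh,
\end{equation*}
holds \emph{exactly}: interior-facet boundary terms in the elementwise integration by parts cancel because $\phi \in H^1(\Omega)$ is single-valued and $\bld v_T\cdot\bld n$ is single-valued by $H(\Div)$-conformity, while boundary-facet contributions vanish from the homogeneous normal trace. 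I will treat $\bld u_T^0$ and $\bld u_T^\perp$ separately. Testing \eqref{eq:auh0vh0} against $\bld u_T^0$ and invoking the identity yields $\hat a_h(\bld u_T^0,\bld u_T^0) = -(\phi,\Div \bld u_T^0) = 0$, so the coercivity estimate \eqref{eq:coercivity} combined with $\hat a_h^\lambda \geq 0$ forces $\bld u_T^0 = 0$.

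For $\bld u_T^\perp$, testing \eqref{eq:auhpvhp} against itself produces
\begin{equation*}
\hat a_h^\mu(\bld u_T^\perp,\bld u_T^\perp) + \lambda \|\Div \bld u_T^\perp\|^2 = -(\phi,\Div \bld u_T^\perp) \leq \|\phi\|\,\|\Div \bld u_T^\perp\|,
\end{equation*}
so discarding the nonnegative $\mu$-part immediately gives the preliminary bound $\|\Div \bld u_T^\perp\| \leq \lambda^{-1}\|\phi\|$. The remaining task is to promote this to a bound on $\|\bld u_T^\perp\|_{1,h}$, which is the discrete analogue of the continuous estimate $\|\bld u^\perp\|_{H^1}\preceq \|\Div \bld u^\perp\|$ used in the proof of Theorem~\ref{lem:gradient:field:robustness}. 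Set $q_h := \Div \bld u_T^\perp \in Q_h = \Div \Sh$, and let $\tilde{\bld v}_T \in \Sh$ satisfy $\Div \tilde{\bld v}_T = q_h$ with $\|\tilde{\bld v}_T\|_{1,h} \preceq \|q_h\|$, as supplied by the discrete Stokes inf-sup for the BDM pair $(\Sh,Q_h)$. Then $\bld u_T^\perp - \tilde{\bld v}_T \in \Sh^0$, so the $\hat a_h$-orthogonality that defines $\Sh^\perp$ yields $\hat a_h(\bld u_T^\perp,\bld u_T^\perp) = \hat a_h(\bld u_T^\perp,\tilde{\bld v}_T)$; Cauchy--Schwarz in the $\hat a_h$ semi-inner product gives $\|\bld u_T^\perp\|_{\hat a_h} \leq \|\tilde{\bld v}_T\|_{\hat a_h}$, and cancellation of the identical $\lambda \|q_h\|^2$ contributions on both sides reduces this to $\hat a_h^\mu(\bld u_T^\perp,\bld u_T^\perp) \leq \hat a_h^\mu(\tilde{\bld v}_T,\tilde{\bld v}_T)$. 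Hence $\|\bld u_T^\perp\|_{1,h} \preceq \|\tilde{\bld v}_T\|_{1,h} \preceq \|q_h\| \leq \lambda^{-1}\|\phi\|$, completing the proof.

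The main obstacle is securing the discrete inf-sup for $(\Sh, Q_h)$ in the broken HDG norm $\|\cdot\|_{1,h}$ rather than in the classical $H^1$ norm. I would obtain it by a Fortin argument: solve $\Div \bld v = q_h$ in $H^1_0(\Omega)$ with $\|\bld v\|_{H^1} \preceq \|q_h\|$ and set $\tilde{\bld v}_T := \Pi_V \bld v$; the commuting-diagram property of the BDM interpolator already invoked in Step~4 of the proof of Theorem~\ref{thm:energy} yields $\Div \tilde{\bld v}_T = \Pi_Q q_h = q_h$, while standard BDM stability together with the norm-equivalence of Step~2 controls $\|\tilde{\bld v}_T\|_{1,h}$ by $\|\bld v\|_{H^1}$. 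No ingredient beyond those already developed in the energy analysis is needed.
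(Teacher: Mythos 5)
Your proof is correct and follows essentially the same route as the paper: show $\hat f$ vanishes on the pointwise divergence-free subspace $\Sh^0$ (using $H(\Div)$-conformity of $\Sh$) to get $\bld u_T^0=0$, then test \eqref{eq:auhpvhp} with $\bld u_T^\perp$ and control $\|\bld u_T^\perp\|_{1,h}$ by $\|\Div \bld u_T^\perp\|$ via discrete inf-sup stability. The only difference is cosmetic: where the paper invokes Lemma \ref{lem:infsupstokes}\eqref{eq:lbb2} (citing \cite{lederer2017polynomial}), you re-derive that bound self-containedly through a BDM/Fortin lift of a continuous right inverse of the divergence combined with the $\hat a_h$-orthogonality of $\Sh^\perp$ to $\Sh^0$.
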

To prove Theorem \ref{thm:gradientrobustnessh}, we shall first recall the following 
inf-sup stability result.
\begin{lemma}[inf-sup stability] \label{lem:infsupstokes}
  \begin{subequations}
    The following properties hold:\\ 
    There holds the discrete LBB condition:
    \begin{equation}\label{eq:lbb}
      \sup_{\bld u_T  \in \Sh} (\Div \bld u_T, q_h) \geq \beta \Vert q_h \Vert_{L^2(\Omega)} \Vert \bld u_T \Vert_{1,h} \quad \text{for all } q_h \in Q_h.
    \end{equation}
    for $\beta$ independent of $\mu,~h,~k$.
    Moreover, for all $q_h \in Q_h$ there exists a unique $\bld u_T^\perp \in \Sh^\perp$%
, s.t.  
    \begin{equation} \label{eq:lbb2}
      \Div(\bld u_T^\perp) = q_h \quad \text{ and } \quad \Vert \bld u_T^\perp  \Vert_{1,h} \leq \beta^{-1} \Vert q_h \Vert_{L^2(\Omega)}. 
    \end{equation}
    \end{subequations}
\end{lemma}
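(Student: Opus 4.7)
The plan is to follow the classical Stokes LBB route while accommodating the HDG setting: first lift $q_h$ to an $[H^1_0(\Omega)]^d$-function via a Bogovskii operator, then apply the BDM interpolator $\Pi_V$ (together with $\Pi_M$ on the facets) to land in $\compUhspace$, and finally reduce the resulting estimate to the HDG norm $\|\cdot\|_{1,h}$ using the energy-minimizing character of the lifting $\mathcal{L}_h$. The second assertion \eqref{eq:lbb2} then follows from \eqref{eq:lbb} by an $\hat a_h$-orthogonal decomposition.

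\textbf{Proof of \eqref{eq:lbb}.} For $q_h\in Q_h$, note that $\int_\Omega q_h\dx = 0$ because $q_h=\Div\bld w_T$ for some $\bld w_T\in \Sh\subset H_0(\Div,\Omega)$. The Bogovskii construction then provides $\bld u^\ast\in [H^1_0(\Omega)]^d$ with $\Div\bld u^\ast=q_h$ and $\|\bld u^\ast\|_1\preceq \|q_h\|$. I set $\bld v_T:=\Pi_V\bld u^\ast\in \Sh$ and $\bld v_F:=\Pi_M(\bld u^\ast)^t\in M_h$ (the latter vanishes on $\partial\Omega$ because $\bld u^\ast$ does). The commuting diagram property of $\Pi_V$ already invoked in Step 4 of the proof of Theorem~\ref{thm:energy} yields $\Div\bld v_T=\Pi_Q q_h=q_h$. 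Standard BDM stability and approximation bounds control the volume term $\|\gradss\bld v_T\|_T\preceq\|\bld u^\ast\|_{1,T}$. For the projected jump I use continuity of $\bld u^\ast$ across facets and $\Pi_M\circ\Pi_M=\Pi_M$ to write
\[
\Pi_M\bigl((\bld v_T)^t-\bld v_F\bigr)=\Pi_M\bigl((\Pi_V\bld u^\ast-\bld u^\ast)^t\bigr),
\]
which combined with a trace inequality and BDM approximation estimates gives $h^{-1/2}\|\Pi_M((\bld v_T)^t-\bld v_F)\|_{\partial T}\preceq \|\bld u^\ast\|_{1,T}$. Altogether $\|(\bld v_T,\bld v_F)\|_{1,h}\preceq\|q_h\|$.

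To pass from $(\bld v_T,\bld v_F)$ to $(\bld v_T,\mathcal{L}_h(\bld v_T))$, observe that by symmetry and coercivity of $a_h^\mu(\cdot,\cdot)$ the facet function $\mathcal{L}_h(\bld v_T)$ minimizes $a_h^\mu((\bld v_T,\cdot),(\bld v_T,\cdot))$ over $M_h$ (the $\lambda$-part is independent of the facet variable). Combining this minimization property with coercivity \eqref{eq:coercivity}, boundedness \eqref{bdd-est}, and the norm equivalence \eqref{norm-eq} yields
\[
\mu\|\bld v_T\|_{1,h}^2\preceq a_h^\mu\bigl((\bld v_T,\mathcal{L}_h(\bld v_T)),(\bld v_T,\mathcal{L}_h(\bld v_T))\bigr)\leq a_h^\mu\bigl((\bld v_T,\bld v_F),(\bld v_T,\bld v_F)\bigr)\preceq \mu\|(\bld v_T,\bld v_F)\|_{1,h}^2,
\]
hence $\|\bld v_T\|_{1,h}\preceq\|q_h\|$. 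Testing \eqref{eq:lbb} with this $\bld v_T$, one has $(\Div\bld v_T,q_h)=\|q_h\|^2$, which delivers the claimed inf-sup with a constant independent of $\mu$, $h$, $k$ (the $k$-robustness relying on the $p$-robust stability of $\Pi_V$).

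\textbf{Proof of \eqref{eq:lbb2}.} Because $\hat a_h$ is an inner product on $\Sh$ by \eqref{eq:coercivity}, $\Sh$ decomposes $\hat a_h$-orthogonally as $\Sh^0\oplus\Sh^\perp$. Writing the lifting $\bld w\in\Sh$ produced above as $\bld w=\bld w^0+\bld w^\perp$ with $\bld w^0\in \Sh^0$, $\bld w^\perp\in \Sh^\perp$, the identity $\Div\bld w^0=0$ gives $\Div\bld w^\perp=q_h$. The Pythagoras-type inequality $\hat a_h(\bld w^\perp,\bld w^\perp)\leq \hat a_h(\bld w,\bld w)$ combined with $\hat a_h(\cdot,\cdot)\simeq \mu\|\cdot\|_{1,h}^2$ then yields $\|\bld w^\perp\|_{1,h}\preceq \|\bld w\|_{1,h}\preceq\|q_h\|$. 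Uniqueness is immediate since the difference of two candidates lies in $\Sh^0\cap\Sh^\perp=\{0\}$.

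The main obstacle is the transfer from the naturally chosen pair $(\bld v_T,\Pi_M(\bld u^\ast)^t)$ to the $\mathcal{L}_h$-defined HDG norm, because $\|\bld v_T\|_{1,h}$ is built with the specific facet value $\mathcal{L}_h(\bld v_T)$ rather than a free one. This gap is bridged precisely by the minimization characterization of $\mathcal{L}_h$ together with the norm equivalence of Step~2 and the boundedness of Step~3 in the proof of Theorem~\ref{thm:energy}, so no further technical work is needed beyond invoking these at the appropriate moment.
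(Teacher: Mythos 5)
Your proposal is genuinely different from the paper's proof, which is essentially a citation: the paper refers to \cite{lederer2017polynomial} for \eqref{eq:lbb} and to a textbook closed-range argument for \eqref{eq:lbb2}. Your constructive route (Bogovskii lifting, BDM interpolation with the commuting-diagram property, and the energy-minimization characterization of $\mathcal{L}_h$ to pass from the pair $(\bld v_T,\Pi_M(\bld u^\ast)^t)$ to the norm $\|\bld v_T\|_{1,h}=\|(\bld v_T,\mathcal{L}_h(\bld v_T))\|_{1,h}$) is sound and supplies scaffolding the paper omits; in particular the minimization argument for $\mathcal{L}_h$ is a nice way to handle the HDG norm. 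Be aware, however, that the genuinely hard content of \eqref{eq:lbb} --- the independence of $\beta$ from $k$ --- is exactly the $p$-robust stability of $\Pi_V$ that you invoke parenthetically without proof; that is precisely what the cited reference establishes, so on that point your proof is no more self-contained than the paper's.

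There is one step in your proof of \eqref{eq:lbb2} that is wrong as written: the asserted equivalence $\hat a_h(\cdot,\cdot)\simeq \mu\|\cdot\|_{1,h}^2$ fails, since $\hat a_h(\bld v_T,\bld v_T)= a_h^{\mu}\bigl((\bld v_T,\mathcal{L}_h(\bld v_T)),(\bld v_T,\mathcal{L}_h(\bld v_T))\bigr)+\lambda\|\Div\bld v_T\|^2$; used naively, the Pythagoras inequality would only give $\|\bld w^\perp\|_{1,h}\preceq(1+\sqrt{\lambda/\mu})\|q_h\|$, which destroys the $\lambda$-uniformity that the lemma must deliver for Theorem \ref{thm:gradientrobustnessh}. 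The argument is rescued by observing that $\Div\bld w=\Div\bld w^\perp=q_h$, so the $\lambda\|q_h\|^2$ contributions on both sides of $\hat a_h(\bld w^\perp,\bld w^\perp)\leq\hat a_h(\bld w,\bld w)$ cancel, leaving $a_h^{\mu}(\cdot\,\text{-terms})$ on both sides; coercivity and boundedness of $a_h^{\mu}$ then give $\|\bld w^\perp\|_{1,h}\preceq\|\bld w\|_{1,h}\preceq\|q_h\|$ with constants independent of $\lambda$ and $\mu$. You should make this cancellation explicit rather than appealing to a norm equivalence for $\hat a_h$ that does not hold. (Also note that the paper's displayed definition of $\Sh^\perp$ contains a typo --- orthogonality should be tested against $\Sh^0$, not all of $\Sh$ --- and your proof correctly uses the intended definition.)
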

\begin{proof}
  For \eqref{eq:lbb} we refer to \cite{lederer2017polynomial} where \eqref{eq:lbb2} is a direct consequence of \eqref{eq:lbb} as it implies the existence of an isomorphism between $\Sh^\perp$ and $Q_h$ related to $(\Div(\cdot),\cdot)$,  cf. e.g. \cite[Lemma 3.58]{john2016finite}. \end{proof}
We now prove Theorem \ref{thm:gradientrobustnessh}.
\begin{proof}[Proof of Theorem \ref{thm:gradientrobustnessh}]
  With $\hat f(\cdot) = (\nabla \phi, \cdot)_{\Omega}$
  there holds after partial integration
\begin{equation}\label{eq:fvh0}
\hat f(\bld v_T^0)= - \sum_{T \in \mathcal{T}_h} (\phi, \Div \bld v^0_T)_T + \sum_{F \in \mathcal{F}_h} ( \phi, \jmp{\bld v^0_T \cdot \bld n}_F ) = 0 \quad \forall ~ \bld v_T^0 \in \Sh^0.
\end{equation}
  From the decomposition in \eqref{eq:decomp}  we hence have $\bld u_T^0=0$.
  Taking $\bld v_T^\perp := 
  \bld u_T^\perp$  in \eqref{eq:auhpvhp}
we get
    \begin{equation*}
   \mu \Vert\bld u_T^\perp \Vert_{1,h}^2
   +\lambda  \Vert\Div({{\bld u}}_T^\perp) \Vert^2
 \preceq 
\hat a_h(\bld u_T^\perp ,\bld u_T^\perp)
= \hat f (\bld u_T^\perp) \preceq \Vert \phi \Vert_{H^{1}(\Omega)} \Vert \bld u_T^\perp \Vert_{1,h}.
    \end{equation*}
Since Lemma \ref{lem:infsupstokes} implies that 
    \[
 \Vert \bld u_T^\perp \Vert_{1,h} \leq \beta^{-1} \Vert \Div(\bld u_T^\perp) \Vert,
    \]
    we finally obtain
    \begin{equation*}
      \Vert  \bld u_T^\perp \Vert_{1,h} \preceq \frac{1}{\mu + \lambda} \Vert \phi \Vert_{1} \quad \stackrel{\lambda \to \infty}{\longrightarrow} 0.
    \end{equation*}
  \end{proof}
  \begin{remark}\label{rem:splittinghdg}
    The splitting into a divergence-free subspace and its $a_h$-orthogonal complement can also be done for $\compUhspace$. Let us relate the splitting of $\Sh$ to a corresponding splitting of $\compUhspace$.
    First, there holds $\compUhspace^0 = \Sh^0 \times M_h$ and
    $\compUhspace^\perp = \{ (\bld v_T, \bld v_F) \in \compUhspace \mid \bld v_T \in \Sh^\perp, \bld v_F = \mathcal{L}_h(\bld v_T)\}$. Second, the solution $\compUh$ of \eqref{scheme} then has the splitting $\compUh = \compUh^0 + \compUh^\perp$ with 
    $\compUh^0 = (\bld u_T^0,\mathcal{L}_h(\bld u_T^0)) \in \compUhspace^0$ 
    and $\compUh^\perp = (\bld u_T^\perp,\mathcal{L}_h(\bld u_T^\perp)) \in \compUhspace^\perp$ and for $\bld f = \nabla \phi,~\phi \in H^1(\Omega)$ there holds $\compUh^0 = 0$ and $\compUh^\perp = \mathcal{O}(\lambda^{-1})$.
  \end{remark}

  \subsection{Numerical results}
\label{sec:num1}
The numerical results for the 
two examples in Section \ref{sec:mot} 
for the scheme \eqref{scheme} are given in Figure \ref{fig:H1ab} and are consistent with 
the results in Theorem \ref{thm:energy} and Theorem
\ref{thm:gradientrobustnessh}.

\begin{figure}
  \begin{center}
    \includegraphics[height=0.255\textwidth]{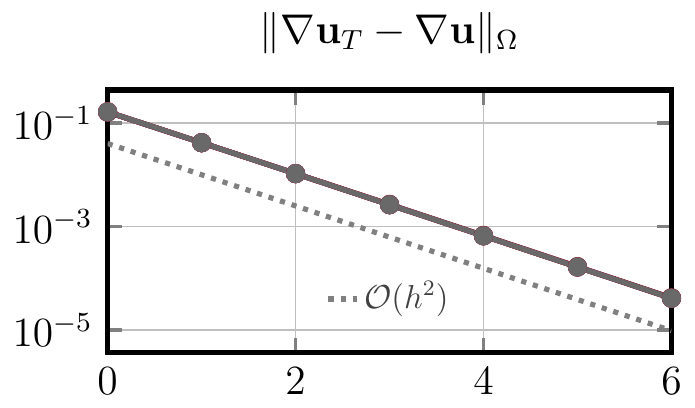}
    \hspace*{-0.02\textwidth}
\includegraphics[height=0.255\textwidth]{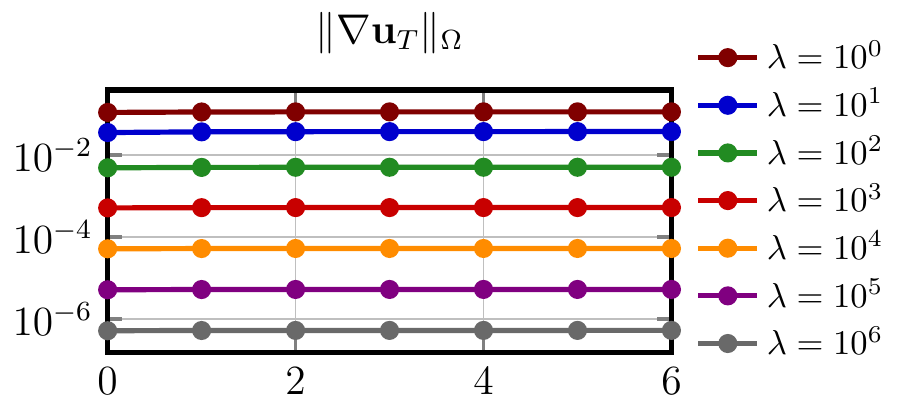}
    \hspace*{-0.06\textwidth}
  \end{center}    
  \caption{Discretizaton error for Example \ref{ex:1} (left) and norm of discrete error for Example \ref{ex:2} (right) for the method \eqref{scheme}, $k=2$, on a barycentric-refined mesh under mesh refinement ($x$-axis: refinement level $L$) and different values of $\lambda$ for Example \ref{ex:2}.} 
  \label{fig:H1ab}
  \hspace*{-0.02\textwidth}
\end{figure}


\section{Relaxed $H(\Div)-$conforming HDG discretization} \label{sec:relaxedhdiv}
The results in Theorem \ref{thm:energy} provide optimal error estimates for the method \eqref{scheme}.
However, for the approximation of the displacement with a polynomial degree $k$ it requires unknowns of degree $k$
for the normal component of the displacement on every facet of the mesh. 
In view of the superconvergence property of other HDG
methods \cite{QiuShenShi16,CockburnFu17c}, where
only unknowns of polynomial degree $k-1$ on the facets are required to obtain an accurate polynomial
approximation of order $k$ (possibly after a local post-processing) this is sub-optimal. 
Here we follow \cite{LLS_ARXIV_2017} to slightly relax the $H(\Div)$-conformity so that only unknowns of polynomial degree 
$k-1$ are involved for normal-continuity. This allows for optimality of the method also in the sense of superconvergent HDG methods.
The resulting method is still volume-locking-free.
We assume the polynomial degree $k\ge 2$ in the following discussion.

\subsection{The relaxed $H(\Div)$-conforming HDG scheme}
We introduce the modified vector space 
\begin{align}
 \label{m-space-1}
 \Sh^- : =&\; \{\bld v_T\in \prod_{T\in\Oh}[\pol^{k}(T)]^d: \;\;
\Pi_F^{k-1}\jmp{\bld v_T\cdot\bld n}_F = 0,\; \,\forall F\in\Eh\},
\end{align}
where $\Pi_F^{k-1}: L^2(F)\rightarrow P^{k-1}(F)$ is the $L^2(F)$-projection:
\begin{align}
 \label{proj-f}
 \int_F(\Pi_F^{k-1}w)v\,\mathrm{ds} = \int_Fw\,v\,\mathrm{ds},\quad\forall v\in P^{k-1}(F).
\end{align}
Details of the construction of the finite element space $\Sh^-$ can be found in \cite[Section 3]{LLS_ARXIV_2017}.
Functions in $\Sh^-$ are only ``almost normal-continuous'', but can be normal-discontinuous in the highest orders. 

Denoting the compound finite element space 
\begin{align*}
 \compUhspace^{\,-}:=  \Sh^-\times M_h,
\end{align*}
then the relaxed $H(\Div)$-conforming HDG scheme reads: Find $\compUh\in \compUhspace^{\,-}$ such that 
\begin{align}\tag{S2}
 \label{scheme-2}
 a_h(\compUh,\compVh) = f(\compVh), \quad \forall \compVh\in \compUhspace^{\,-}.
\end{align}
\begin{remark}
Notice that the globally coupled degrees of freedom for the above 
relaxed $H(\Div)$-conforming scheme are polynomials of degree $k-1$ per facet for both 
tangential and normal component of the displacement, while that for the original $H(\Div)$-conforming scheme \eqref{scheme}
are polynomials of degree $k-1$ per facet for the tangential component of the displacement, and 
polynomials of degree $k$ per facet for the normal component. 
This relaxation reduces the globally coupled degrees of freedom  which improves the sparsity pattern of the
linear systems.
\end{remark}

\subsection{Error estimates}
The error analysis of the relaxed scheme \eqref{scheme-2} follows closely from that for the original scheme \eqref{scheme} in
Theorem \ref{thm:energy}.

Due to the violation of $H(\Div)$-conformity of $\Sh^-$, we have a consistency term to take care of.

\begin{lemma}
 \label{lemma:cs}
 Let $\bld u\in [H_0^2(\Omega)]^d$ be the solution to the equations \eqref{eqns} and define the splitting $\bld f = \bld f^\mu + \bld f^\lambda$ with $\bld f^\mu = - \Div\left(2\mu\gradss \bld u\right)$ and $\bld f^\lambda = -  \grads\left(\lambda\,\Div \bld u\right)$ and $f(\cdot) = f^{\mu}(\cdot) + f^{\lambda}(\cdot)$ correspondingly.
 Denote $\compU:=(\bld u, \bld u^t)\in \compUspace$.
 There holds for all $\compV=(\bld v_T,\bld v_F)\in \compUhspace^{\,-} + \compUspace$
 \begin{subequations}
 \begin{align}
  \label{csmu}
  a_h^{\mu}(\compU, \compV) &= f^\mu(\compV) + \mathcal{E}_c^\mu(\bld u, \compV),\\
  \label{cslambda}
  a_h^\lambda(\compU, \compV) &= f^\lambda(\compV) + \mathcal{E}_c^\lambda(\bld u, \compV), \\
  \label{cs}
  a_h(\compU, \compV) &= f(\compV) + \mathcal{E}_c(\bld u, \compV), \\
                                                   \intertext{with}
\mathcal{E}_c^{\mu}(\bld u, \compV)
& =
\sum_{T\in\Oh}\int_{\partial T}\left(2\mu(\gradss(\bld u)\bld n)\cdot\bld n\right)
                                (id-\Pi_F^{k-1})(\bld v_T\cdot\bld n). \\
\mathcal{E}_c^{\lambda}(\bld u, \compV)
& =
\sum_{T\in\Oh}\int_{\partial T}\left(\lambda\Div\bld u\right)
                                (id-\Pi_F^{k-1})(\bld v_T\cdot\bld n), \\
\mathcal{E}_c(\bld u, \compV) &= \mathcal{E}_c^{\mu}(\bld u, \compV) + \mathcal{E}_c^{\lambda}(\bld u, \compV).
  \label{cs-term}
 \end{align}
\end{subequations}
Moreover, for $\bld u\in [H_0^\ell(\Omega)]^d$, $\ell\ge 2$ and $1\le m\le\min(k,\ell-1)$ we have
\begin{subequations}
\begin{align}
\mathcal{E}_c^\mu(\bld u, \compV)
& \preceq 
h^m \mu^{1/2}\|\bld u\|_{m+1} \|\compV\|_{\mu,h}, \quad
\mathcal{E}_c^\lambda(\bld u, \compV)
\preceq 
h^m \frac{\lambda}{\mu^{1/2}}\|\Div\bld u\|_m \|\compV\|_{\mu,h}.
\end{align}
\begin{align}
\mathcal{E}_c(\bld u, \compV)
& \preceq 
h^m\left(\mu^{1/2}\|\bld u\|_{m+1}+\frac{\lambda}{\mu^{1/2}}\|\Div\bld u\|_m\right)\|\compV\|_{\mu,h}.
  \label{cs-term-est}
\end{align}
\end{subequations}
\end{lemma}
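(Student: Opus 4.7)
The plan is in two stages: first derive the consistency identities \eqref{csmu}--\eqref{cs} by element-wise integration by parts applied to the strong form of the PDE, then bound the resulting boundary residual by exploiting the facet orthogonality encoded in $\Sh^-$ together with the rigid-motion projection $\Pi_{RM}$ from \eqref{proj-rm}.

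For the identities, I take $\compV = (\bld v_T,\bld v_F) \in \compUhspace^{\,-} + \compUspace$ and multiply $-\Div(2\mu\gradss\bld u)=\bld f^\mu$ element-wise by $\bld v_T$, integrate by parts, and sum. Because $\bld u\in [H^2_0]^d$ is single-valued across facets, $\jmp{\compU^t}=0$, so the symmetry and stabilization terms in $a_h^\mu$ vanish when one argument is $\compU$; what is left equals
$a_h^\mu(\compU,\compV) - f^\mu(\compV) = \sum_T\int_{\partial T} 2\mu(\gradss\bld u)\bld n\cdot\bld v_T - \sum_T\int_{\partial T}2\mu(\gradss\bld u)\bld n\cdot\jmp{\compV^t}$. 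Splitting $\bld v_T = (\bld v_T\cdot\bld n)\bld n + \bld v_T^t$ and using (i) single-valuedness of the full traction $2\mu(\gradss\bld u)\bld n$ across interior facets together with $\bld v_F\equiv 0$ on $\partial\Omega$ (which annihilates the term involving $\bld v_F$), and (ii) the fact that the tangential pieces combine into the jump $\jmp{\compV^t}$, the tangential contributions cancel. What remains is the scalar normal part $\sum_T\int_{\partial T}(2\mu(\gradss\bld u)\bld n\!\cdot\!\bld n)(\bld v_T\!\cdot\!\bld n)$. The defining property of $\Sh^-$, $\Pi_F^{k-1}\jmp{\bld v_T\!\cdot\!\bld n}=0$, together with $\Pi_F^{k-1}(\bld v_T\!\cdot\!\bld n)=0$ on $\partial\Omega$, ensures that replacing $\bld v_T\!\cdot\!\bld n$ by $(id-\Pi_F^{k-1})(\bld v_T\!\cdot\!\bld n)$ does not change the elementwise sum (since the $\Pi_F^{k-1}$-part adds up to zero when assembled). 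This yields the formula for $\mathcal{E}_c^\mu$. An analogous integration by parts on $-\grads(\lambda\Div\bld u)=\bld f^\lambda$ produces $\mathcal{E}_c^\lambda$ with $\psi=\lambda\Div\bld u$ (which is scalar-valued and single-valued on facets); \eqref{cs} then follows by addition.

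For the estimates the key manipulation is to use the projection orthogonality twice, once on each factor, to write
\begin{equation*}
\mathcal{E}_c^\mu(\bld u,\compV) = \sum_F\int_F (id-\Pi_F^{k-1})\psi^\mu\cdot(id-\Pi_F^{k-1})\jmp{\bld v_T\cdot\bld n}_F,
\end{equation*}
with $\psi^\mu = 2\mu(\gradss\bld u)\bld n\cdot\bld n$, and similarly for $\mathcal{E}_c^\lambda$ with $\psi^\lambda = \lambda\Div\bld u$. Standard Bramble--Hilbert on the facet (via trace inequalities) provides $\|(id-\Pi_F^{k-1})\psi^\mu\|_F \preceq h^{m-1/2}\mu\|\bld u\|_{H^{m+1}(T_F)}$ and $\|(id-\Pi_F^{k-1})\psi^\lambda\|_F \preceq h^{m-1/2}\lambda\|\Div\bld u\|_{H^m(T_F)}$ for $1\le m\le \min(k,\ell-1)$.

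The main obstacle is to extract the additional factor $h^{1/2}$ from $\|(id-\Pi_F^{k-1})(\bld v_T\!\cdot\!\bld n)\|_F$ needed to obtain the full order $h^m$. Here the assumption $k\ge 2$ and the observation \eqref{inclusion} are essential: rigid motions have normal trace in $P^1(F)\subset P^{k-1}(F)$, so $(id-\Pi_F^{k-1})(\Pi_{RM}\bld v_T\!\cdot\!\bld n)=0$. Therefore
$\|(id-\Pi_F^{k-1})(\bld v_T\!\cdot\!\bld n)\|_F \le \|(\bld v_T-\Pi_{RM}\bld v_T)\!\cdot\!\bld n\|_F \preceq h^{-1/2}\|\bld v_T-\Pi_{RM}\bld v_T\|_{T_F} \preceq h^{1/2}\|\gradss\bld v_T\|_{T_F}$,
where the first step uses a discrete trace inequality and the last step combines \eqref{proj-rm-1} and \eqref{proj-rm-2}. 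Combining the two factorwise bounds, applying Cauchy--Schwarz over facets, and absorbing $\|\gradss\bld v_T\|_\Omega\preceq\mu^{-1/2}\|\compV\|_{\mu,h}$ yields the claimed estimate for $\mathcal{E}_c^\mu$; the same calculation with $\psi^\lambda$ gives the $\mathcal{E}_c^\lambda$-bound (now the prefactor $\lambda\mu^{-1/2}$ appears because we still extract $\mu^{1/2}$ from $\|\compV\|_{\mu,h}$), and \eqref{cs-term-est} follows by summation.
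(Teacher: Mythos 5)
Your proposal is correct and follows essentially the same route as the paper's proof: element-wise integration by parts combined with the single-valuedness of $\bld u$ and of the traction to reduce the residual to the normal boundary term, insertion of $(id-\Pi_F^{k-1})$ via the defining orthogonality of $\Sh^-$, and then the double projection orthogonality together with the rigid-motion projector $\Pi_{RM}$ (whose normal trace lies in $P^1(F)\subset P^{k-1}(F)$ for $k\ge 2$) to gain the extra factor $h^{1/2}$ on the test-function side. The only differences are cosmetic (facet-wise versus element-boundary-wise assembly).
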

\begin{proof}
By continuity of $\bld u$ and integration by parts, we get
 \begin{align*}
    a_h^{\mu}(\compU, \compV)-f^{\mu}(\compV) 
    = &\; \sum_{T\in \Oh}
 \int_{\partial T}2\mu\,\gradss(\bld u)\bld n\cdot (\bld v_T-\bld v_T^t)\ds \\
= &\; \sum_{T\in \Oh}
 \int_{\partial T} 2\mu(\gradss(\bld u)\bld n\cdot\bld n 
(\bld v_T\cdot\bld n)\,\ds\\ 
= &\; \sum_{T\in \Oh}
 \int_{\partial T}(2\mu(\gradss(\bld u)\bld n)\cdot\bld n) 
(id-\Pi_F^{k-1})(\bld v_T\cdot\bld n)\,\ds\\ 
   = &\; \mathcal{E}_c^{\mu}(\bld u,\compV),
 \end{align*}
where the third equality follows from the fact that $\Pi_{F}^{k-1}\jmp{\bld v\cdot\bld n}_F = 0$ for all
$\bld v\in \Sh^-$. Analogously we obtain $a_h^{\lambda}(\compU, \compV)-f^{\lambda}(\compV) =  \mathcal{E}_c^{\lambda}(\bld u,\compV)$.

Applying the Cauchy-Schwarz inequality and properties of the $L^2$-projection, we have
\begin{align*}
\mathcal{E}_c^{\mu}(\bld u,\compV) = &\int_{\partial T}(id-\Pi_F^{k-1})\left(2\mu(\gradss(\bld u)\bld n)\cdot\bld n\right)
(id-\Pi_F^{k-1})(\bld v_T\cdot\bld n)\\
 &\;\le 
 \left(2\mu\|(id-\Pi_F^{k-1})\gradss(\bld u)\|_{\partial T}\right)
 \|(id-\Pi_F^{k-1})(\bld v_T\cdot\bld n)\|_{\partial T}\\
  &\;\preceq
h^{m-1/2} \mu\|\gradss(\bld u)\|_{H^m(T)} 
 \|(id-\Pi_F^{k-1})(\bld v_T\cdot\bld n)\|_{\partial T}\\
  &\;\preceq
h^m \mu\|\bld u\|_{H^{m+1}(T)}
 \|(id-\Pi_{RM})\bld v_T\|_{\partial T} \; \preceq
h^m\mu\|\bld u\|_{H^{m+1}(T)}
\|\gradss\bld v_{T} \|_{T},
\end{align*}
where the last inequality follows from the trace theorem and the approximation properties \eqref{proj-rm}.
Similarly,
\begin{align*}
\mathcal{E}_c^{\lambda}(\bld u,\compV) = &\int_{\partial T}(id-\Pi_F^{k-1}) \lambda\Div\bld u
(id-\Pi_F^{k-1})(\bld v_T\cdot\bld n)\\
 &\;\le 
 \lambda\|(id-\Pi_F^{k-1})\Div \bld u\|_{\partial T}
 \|(id-\Pi_F^{k-1})(\bld v_T\cdot\bld n)\|_{\partial T}\\
&\; \preceq
h^m
{ \lambda}\|\Div \bld u\|_{H^m(T)}
\|\gradss\bld v_{T} \|_{T}.
 \end{align*}
Summing over all elements concludes the proof.
\end{proof}

We have the following error estimates, whose proof follows closed from that for Theorem \ref{thm:energy}. 
We only sketch the proof with a focus on the modification needed from the proof for Theorem \ref{thm:energy}.
\begin{theorem}
\label{thm:2}
 Assume $k\ge 2$ and the regularity $\bld u\in [H^{k+1}(\Omega)]^d$.
 Let $\compUh\in \compUhspace^{\,-}$ be the numerical solution to the scheme \eqref{scheme-2}.
 Then, for sufficiently large stabilization parameter $\alpha_0$, 
 the following estimate holds
 \begin{subequations}
 \label{est2}
 \begin{alignat}{2}
 \label{est2-1}
\|\compU-\compUh\|_{\mu,h}
\preceq&\; h^{k}
 (\mu^{1/2} \|\bld u\|_{k+1}+\frac{\lambda}{\mu^{1/2}}\|\Div\bld u\|_k),\\
 \label{est2-1b}
\|\Div(\bld u-\bld u_T)\|
\preceq &\;(\mu/\lambda)^{1/2} h^{k}
 \|\bld u\|_{k+1} + \left(\frac{\lambda^{1/2}}{\mu^{1/2}}+1\right)h^{k}\|\Div\,\bld u\|_{k}.
 \end{alignat}
Moreover, under the regularity assumption \eqref{dual}, the following estimate holds
 \begin{alignat}{2}
 \label{est2-2}
 \|\bld u-\bld u_T\|
\preceq 
h^{k+1} \left(\|\bld u\|_{k+1}+(\frac{\lambda}{\mu}+1)\|\Div \bld u\|_{k}\right).
 \end{alignat}
  \end{subequations}
\end{theorem}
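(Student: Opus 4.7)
The plan is to mirror the five-step template of Theorem \ref{thm:energy}, modifying only the steps affected by the relaxed normal-continuity of $\Sh^-$. Since the coercivity, norm-equivalence, and boundedness results (Steps 1--3) rely only on local polynomial structure on individual simplices and on trace-inverse inequalities for $[\pol^k(T)]^d$, they are insensitive to the relaxation of facet jumps and transfer to $\compUhspace^{\,-}$ essentially verbatim; the relevant constants still depend on $\alpha_0$ and $k$ but not on $\mu$, $\lambda$, or $h$.

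The changes enter in Step 4. Exact Galerkin orthogonality is replaced by the consistency identity $a_h(\compU-\compUh,\compVh) = \mathcal{E}_c(\bld u,\compVh)$ from Lemma \ref{lemma:cs}. I would still choose $\compVh = (\Pi_V \bld u,\Pi_M\bld u)$ with $\Pi_V$ the BDM interpolator, noting that $\Sh \subset \Sh^-$ so the interpolant remains admissible. The crucial cancellation $a_h^{\lambda}(\compU-\compVh,\compUh-\compVh)=0$ is preserved: $\Div_h \Sh^- \subseteq Q_h=\prod_T \pol^{k-1}(T)$ (broken divergences of piecewise $\pol^k$-fields are piecewise $\pol^{k-1}$), and the BDM commuting property $\Div \Pi_V\bld u = \Pi_Q\Div\bld u$ still delivers an $L^2$-orthogonality to $\Div_h(\bld u_T-\Pi_V\bld u)\in Q_h$. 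Testing with $\compUh-\compVh$ and applying \eqref{cs-term-est}, the right-hand side picks up an additional term $h^k \lambda \mu^{-1/2}\|\Div\bld u\|_k \cdot \|\compUh-\compVh\|_{\mu,h}$ relative to Theorem \ref{thm:energy}, leading to \eqref{est2-1}. The divergence estimate \eqref{est2-1b} then follows from \eqref{est2-1} by the same triangle-inequality argument as before.

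For Step 5, I would solve the dual problem \eqref{dual-eq} with $\bld\theta = \bld u-\bld u_T$. Using symmetry of $a_h(\cdot,\cdot)$ together with primal and adjoint consistency (both instances of Lemma \ref{lemma:cs}), I get the identity
\[
\|\bld u-\bld u_T\|^2 = a_h(\compS-\compSh,\compU-\compUh) + \mathcal{E}_c(\bld u,\compSh) - \mathcal{E}_c(\bld\phi,\compU-\compUh),
\]
with $\compSh=(\Pi_V\bld\phi,\Pi_M\bld\phi)$. The first term is bounded as in Theorem \ref{thm:energy} Step 5, splitting into $\mu$- and $\lambda$-parts and invoking the regularity \eqref{dual}. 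For the two new consistency terms I would apply \eqref{cs-term-est} with $m=k$ against $\bld u$ and with $m=1$ against $\bld\phi$, combine this with the stability of $\compSh$ in $\|\cdot\|_{\mu,h}$ and with the already established energy estimate \eqref{est2-1} for $\compU-\compUh$, and use \eqref{dual} to trade the $H^2$-regularity of $\bld\phi$ for an extra factor of $h$ and a factor $\|\bld u-\bld u_T\|$. Dividing through yields \eqref{est2-2}.

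The main obstacle is that $\mathcal{E}_c^{\lambda}$ is only controllable in the $\mu$-weighted energy norm, so balancing it forces the quotient $\lambda/\mu$ into the bounds; this is precisely what produces the non-robust prefactors $\lambda\mu^{-1/2}\|\Div\bld u\|_k$ in \eqref{est2-1b} and $(\lambda/\mu+1)\|\Div\bld u\|_k$ in \eqref{est2-2}. Unlike Theorem \ref{thm:energy}, the estimates are thus not unconditionally free of volume-locking in the sense of Definition \ref{def:vollock}; robustness requires $\lambda\|\Div\bld u\|_k$ to remain controlled, which is consistent with the nearly-incompressible limit where $\Div\bld u=\mathcal{O}(\lambda^{-1})$.
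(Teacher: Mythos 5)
Your overall architecture and your Steps 1--4 coincide with the paper's proof: the coercivity, norm-equivalence and boundedness results carry over to $\compUhspace^{\,-}$; the interpolant $\compVh=(\Pi_V\bld u,\Pi_M\bld u)$ remains admissible since $\Sh\subset\Sh^-$; the cancellation $a_h^{\lambda}(\compU-\compVh,\compUh-\compVh)=0$ survives because element-wise divergences of $\Sh^-$-functions still lie in $Q_h$ while $\Div(\bld u-\Pi_V\bld u)=(\id-\Pi_Q)\Div\bld u$; and replacing Galerkin orthogonality by the consistency identity of Lemma \ref{lemma:cs} produces exactly the additional term $h^k\lambda\mu^{-1/2}\|\Div\bld u\|_k$ that appears in \eqref{est2-1} and \eqref{est2-1b}. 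This part is correct and is essentially the paper's argument.

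The gap is in your Step 5, in the treatment of $\mathcal{E}_c(\bld u,\compSh)$. You propose to bound it by \eqref{cs-term-est} with $m=k$ together with stability of $\compSh$ in $\|\cdot\|_{\mu,h}$. But $\|\compSh\|_{\mu,h}\preceq \mu^{1/2}\|\bld\phi\|_{2}\preceq \mu^{-1/2}\|\bld u-\bld u_T\|$ carries no power of $h$, so this route only yields $\mathcal{E}_c(\bld u,\compSh)\preceq h^{k}\bigl(\|\bld u\|_{k+1}+\tfrac{\lambda}{\mu}\|\Div\bld u\|_k\bigr)\|\bld u-\bld u_T\|$; after dividing by $\|\bld u-\bld u_T\|$ this leaves an $\mathcal{O}(h^{k})$ contribution and the claimed rate $h^{k+1}$ in \eqref{est2-2} is lost. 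The paper's proof relies on the observation that this term \emph{vanishes identically}: summing the two element contributions on each interior facet, $\mathcal{E}_c^{\mu}(\bld u,\cdot)$ and $\mathcal{E}_c^{\lambda}(\bld u,\cdot)$ depend only on $(\id-\Pi_F^{k-1})\jmp{\bld v_T\cdot\bld n}_F$ (the weights $(\gradss\bld u\,\bld n)\cdot\bld n$ and $\Div\bld u$ are single-valued on $F$), and $\compSh=(\Pi_V\bld\phi,\Pi_M\bld\phi)$ is taken in the exactly $H(\Div)$-conforming subspace $\compUhspace\subset\compUhspace^{\,-}$, so $\jmp{\Pi_V\bld\phi\cdot\bld n}_F=0$ and hence $\mathcal{E}_c(\bld u,\compSh)=0$. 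With that fix, your remaining ingredients --- the $m=1$ bound on $\mathcal{E}_c(\bld\phi,\compU-\compUh)$ combined with \eqref{dual} and the already-proven \eqref{est2-1}, plus the bound on $a_h(\compS-\compSh,\compU-\compUh)$ split into its $\mu$- and $\lambda$-parts --- are exactly those of the paper and deliver \eqref{est2-2}.
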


\begin{remark}[Volume-locking-free estimates]
For convex polygonal domain $\Omega$, it is proven \cite{BrennerSung92} that 
\[
 \mu\|\bld u\|_2+\lambda \|\Div \bld u\|_1\preceq \|\bld f\|.
\]
If we have the regularity shift, for $k\ge 2$,
\[
 \mu\|\bld u\|_{k+1}+\lambda \|\Div \bld u\|_{k}\preceq \|\bld f\|_k,
\]
the above estimates are free of volume-locking when $\lambda\rightarrow +\infty$.
\end{remark}

\begin{proof}
To prove the energy estimates \eqref{est2-1} and \eqref{est2-1b}, 
we still take $\compVh = (\Pi_V \bld u, \Pi_M \bld u)\in \compUhspace\subset \compUhspace^{\,-}$ 
as in the proof of Theorem \ref{thm:energy}.
By coercivity, 
\begin{align*}
&  \|\compUh-\compVh\|_{\mu,h}^2
  +\lambda \|\Div(\bld u_T-\bld v_T)\|^2\\
  & \preceq a_h^{\mu}(\compUh-\compVh, \compUh - \compVh)
  +\lambda \|\Div(\bld u_T-\bld v_T)\|^2\\
& = a_h(\compUh-\compVh, \compUh - \compVh) 
 = a_h(\compU-\compVh, \compUh - \compVh) -\mathcal{E}_c(\bld u,\compUh-\compVh)\\
 &= a_h^{\mu}(\compU-\compVh,\; \compUh - \compVh) 
  -\mathcal{E}_c(\bld u,\compUh-\compVh)
 \\
&\preceq \left( \|\compU-\compVh\|_{\mu,\ast,h} 
+\mu^{1/2}h^k\|\bld u\|_{k+1}+\frac{\lambda}{\mu^{1/2}}h^k\|\Div\bld u\|_k\right) \|\compUh-\compVh\|_{\mu,h}
\end{align*}
This implies
\begin{align*}
\|\compU-\compUh\|_{\mu,h}
  +\lambda^{1/2}\|\Div(\bld u_T-\bld v_T)\|
  \preceq  h^k\left(\mu^{1/2}\|\bld u\|_{k+1}+\frac{\lambda}{\mu^{1/2}}\|\Div\bld u\|_k\right).
\end{align*}
Then, the estimates \eqref{est2-1} and  \eqref{est2-1b} follows from \eqref{iii} and the triangle inequality.

To prove the $L^2$-estimate, let $\bld \phi$ be the solution to the dual problem \eqref{dual-eq} with 
$\bld \theta = \bld u- \bld u_T$ and $\compS = (\bld  \phi, \bld \phi^t) \in \compUspace$.
By symmetry of the bilinear form $a_h(\cdot,\cdot)$ and Lemma \ref{lemma:cs}, we have, with $\compSh = (\Pi_V \bld  \phi, \Pi_M \bld \phi) \in \compUhspace$
\begin{align*}
  \|\bld u- \bld u_T\|_\Omega^2 &=
a_h(\compS, \compU-\compUh) - \mathcal{E}_c(\bld \phi, \compU-\compUh)\\
&= 
a_h(\compS-\compSh, \compU-\compUh)
- \mathcal{E}_c(\bld \phi, \compU-\compUh)
+
\underbrace{\mathcal{E}_c(\bld u, \compSh)}_{=0}
\\
& \preceq 
h (\mu\|\bld \phi\|_2+\lambda\|\Div\bld \phi\|_1)
(\mu^{-1/2}\|\compU-\compUh\|_{\mu,*,h}+ \|(I-\Pi_Q)\Div\bld u\|)\\
& \preceq 
          h^{k+1} \|\bld u- \bld u_T\|_\Omega 
          \left(\|\bld u\|_{k+1}+(\frac{\lambda}{\mu}+1)\|\Div \bld u\|_{k}\right).
\end{align*}
In the last step we invoked the regularity assumption \eqref{dual}. 
This completes the proof of \eqref{est2-2}.
\end{proof}

\begin{remark}[Lack of gradient-robustness as a locking phenomenon]
Although, the scheme \eqref{scheme-2} is free of
volume-locking, it is not
free of another locking phenomenon, though.
Indeed, the explicit dependence of the right
side of the error estimate \eqref{est2-2}
on $\lambda$ indicates a classical locking
phenomenon in the sense of Babu\v{s}ka and Suri
\cite{BabuskaSuri1992}, where they write in the abstract:
``A numerical scheme for the approximation of a parameter-dependent problem is said to exhibit locking if the accuracy of the approximations deteriorates as the parameter tends to a limiting value.''
Comparing with the error
estimate \eqref{est-2} for the gradient-robust scheme
\eqref{scheme}, we recognize that
schemes for nearly-incompressible
linear elasticity are only
locking-free in the sense of \cite{BabuskaSuri1992},
if they are gradient-robust and free of volume-locking, simultaneously.
The situation is very similar to the incompressible Stokes problem.
Only schemes, which are pressure-robust and
discretely inf-sup stable simultaneously \cite{MR3743746}, are really locking-free
in the sense of Babu\v{s}ka and Suri \cite{BabuskaSuri1992}.
\end{remark}

\subsection{Numerical results for the scheme \eqref{scheme-2}}
The numerical results for the 
two examples in Section \ref{sec:mot} 
for the scheme \eqref{scheme-2} are given in Figure
\ref{fig:H2ab}. 
We observe from Figure \ref{fig:H2ab} (left) that 
the errors for the scheme \eqref{scheme-2} are
independent of $\lambda$ for Example \ref{ex:1}, which are similar to those for the scheme \eqref{scheme}.
This is consistent with the volume-locking-free
estimates in Theorem \ref{thm:2}.
However, the norm of the discrete solution for the scheme \eqref{scheme-2} for Example \ref{ex:2}
shows an upper bound depending on $h$ which 
indicates that it is not gradient-robust.
In the next subsection, we slightly modify the scheme \eqref{scheme-2} to
make it gradient-robust.
\begin{figure}
  \begin{center}
    \includegraphics[height=0.255\textwidth]{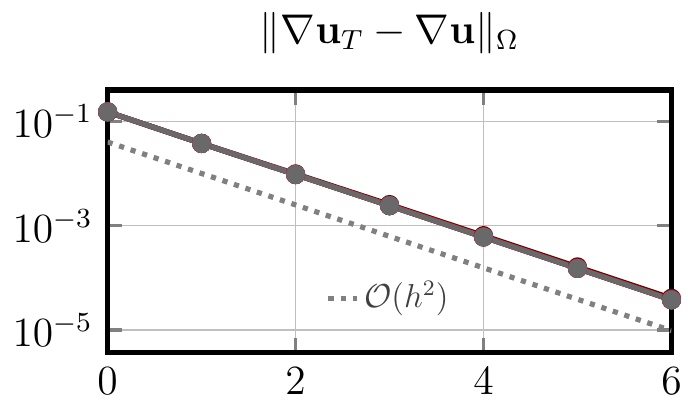}
    \hspace*{-0.02\textwidth}
\includegraphics[height=0.255\textwidth]{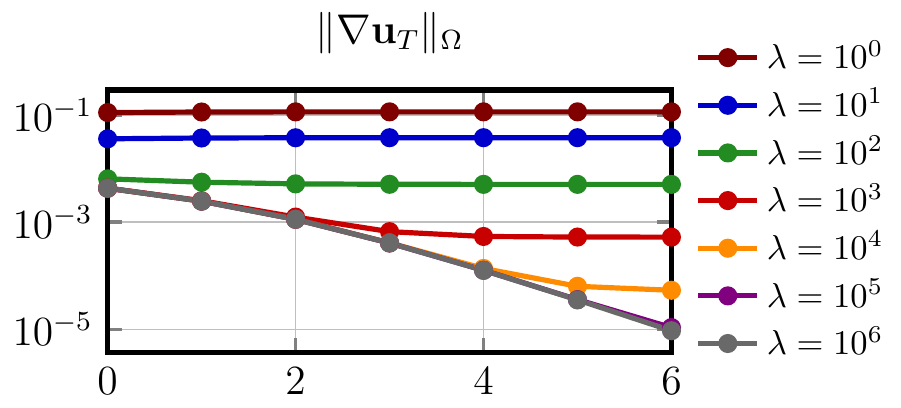}
\end{center}
  \caption{Discretizaton error for Example \ref{ex:1} (left) and norm of discrete error for Example \ref{ex:2} (right) for the method \eqref{scheme-2}, $k=2$, on a barycentric-refined mesh under mesh refinement ($x$-axis: refinement level $L$) and different values of $\lambda$ for Example \ref{ex:2}.} 
  \label{fig:H2ab}
  \hspace*{-0.02\textwidth}
\end{figure}


\subsection{Gradient-robust relaxed $H(\Div)$-conforming HDG scheme}
As in Section \ref{sec:hdivhdg:gradrobhdiv} we consider the equivalent DG formulation
\begin{equation}\tag{S2-DG}\label{eq:S2-DG}
  \hat a_h(\bld u_T,\bld v_T) = \hat{f}(\bld v_T) \quad \forall \bld v_T\in \Sh^-,
\end{equation}
If we consider a splitting as in \eqref{eq:def:decomp} with
\begin{subequations}\label{eq:def2:decomp}
\begin{equation}\label{eq:def2:Uh0}
  \Sh^{\,-,0} := \{ \bld v_T \in \Sh^{\,-} : \Div \bld v_T = 0, \ \forall T \in \Oh
  \}
\end{equation}
and 
\begin{equation}\label{eq:def2:Uhp}
  \Sh^{\,-,\perp} := \{ \bld v_T \in \Sh^{\,-} :
    \hat a_h(\bld v_T, \bld w_T) = 0, 
    \forall \bld w_T \in \Sh^{\,-,0} \},
  \end{equation}
\end{subequations}
we can again decompose every discrete function $\bld v_T \in \Sh^{\,-}$ as
$\bld v_T = \bld v_T^0 + \bld v_T^\perp$ with $\bld v_T^0 \in
\Sh^{\,-,0},\bld v_T^\perp \in \Sh^{\,-,\perp}$.

\begin{subequations} \label{eq:decomp2}
  \begin{align}
    \hat a_{h}(\bld u_T^0, \bld v_T^0) & = \hat f(\bld v_T^0) \quad \forall ~ \bld v_T^0 \in \Sh^{\,-,0}, \label{eq:auh0vh02}\\
    \hat a_{h}(\bld u_T^\perp, \bld v_T^\perp)
  & = \hat f(\bld v_T^\perp) \quad \forall ~ \bld v_T^\perp\in \Sh^{\,-,\perp}.
        \label{eq:auhpvhp2}
  \end{align}
\end{subequations}

Note that Theorem \ref{thm:gradientrobustnessh} does not directly translate to the relaxed $H(\Div)$-conforming case only because \eqref{eq:fvh0} does not hold as the facet normal jumps do not vanish. However, we can introduce a modification in the treatment of the 
right hand side that re-enables gradient-robustness.
The modified scheme is: Find $\compUh\in \compUhspace^{\,-}$ such that 
\begin{align}\tag{S3}
 \label{scheme-3}
 a_h(\compUh,\compVh) = f( (\Pi_V \bld v_T, 0)), \quad \forall \compVh\in \compUhspace^{\,-}.
\end{align}
or in the equivalent DG formulation: Find $\bld u_T \in \Sh^{\,-}$ such that 
\begin{align}\tag{S3-DG}
 \label{scheme-3-dg}
\hat a_h(\bld u_T,\bld v_T) = \hat f( \Pi_V \bld v_T), \quad \forall \bld v_T \in \Sh^{\,-}.
\end{align}
Here,
$\Pi_V$ is a generalization of the BDM interpolator, \cite[Proposition
2.3.2]{BoffiBrezziFortin13}, which can deal with only element-wise smooth
functions by averaging, cf. 
the appendix for a definition. 
\begin{remark}
  Let us note that the BDM interpolator is not mandatory here. In \cite{LLS_ARXIV_2017} and \cite{LLS_ARXIV_2018} several conditions on a suitable reconstruction operator are formulated. A much simpler version of the BDM interpolation operator is suggested that exploits the knowledge on the pre-image $\Sh^{\,-}$ and a proper basis for the relaxed $H(\Div)$-conforming finite element space. The reconstruction operation can then be realized by a simple averaging of a few unknowns which makes it computationally very cheap. In the numerical examples below we make use of this operator.
\end{remark}

\begin{lemma}
  The scheme \eqref{scheme-3-dg} is \emph{gradient-robust}, i.e. for $\bld f=\nabla \phi$, $\phi \in H^1(\Omega)$, the solution 
  $\bld u_T= \bld u_T^0  + \bld u_T^\perp
  \in \Sh^{\,-}$ has
  $
  \bld u_T^0 = \bld 0,~
  \bld u_T^\perp = \mathcal{O}(\lambda^{-1}).
  $
\end{lemma}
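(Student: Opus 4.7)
My plan is to mimic the structure of the proof of Theorem \ref{thm:gradientrobustnessh}, with the essential modification coming from the presence of $\Pi_V$ on the right-hand side. The decomposition $\bld u_T = \bld u_T^0 + \bld u_T^\perp$ from \eqref{eq:decomp2} now reads, in the modified scheme,
\begin{equation*}
  \hat a_h(\bld u_T^0, \bld v_T^0) = \hat f(\Pi_V \bld v_T^0), \qquad
  \hat a_h(\bld u_T^\perp, \bld v_T^\perp) = \hat f(\Pi_V \bld v_T^\perp),
\end{equation*}
so I proceed by treating the two components separately.

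\textbf{First I would show that $\bld u_T^0 = \bld 0$.} The key observation is that $\Pi_V$ maps $\Sh^{\,-}$ into the fully $H(\Div)$-conforming BDM space $\Sh \subset H_0(\Div,\Omega)$ and satisfies the commuting diagram property $\Div \Pi_V = \Pi_Q \Div$. Therefore, for any $\bld v_T^0 \in \Sh^{\,-,0}$, we have $\Div \bld v_T^0 = 0$ element-wise, which gives $\Div (\Pi_V \bld v_T^0) = \Pi_Q \Div \bld v_T^0 = 0$ globally. Integrating by parts (and using that $\Pi_V \bld v_T^0 \in H_0(\Div)$ so boundary terms vanish and there are no interior normal jumps), we obtain
\begin{equation*}
  \hat f(\Pi_V \bld v_T^0) = (\nabla \phi, \Pi_V \bld v_T^0)_\Omega = -(\phi, \Div(\Pi_V \bld v_T^0))_\Omega = 0.
\end{equation*}
Hence $\hat a_h(\bld u_T^0,\bld v_T^0)=0$ for all $\bld v_T^0\in \Sh^{\,-,0}$, which forces $\bld u_T^0=\bld 0$. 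This is precisely the step that failed for scheme \eqref{scheme-2}: without $\Pi_V$, the normal jumps of $\bld v_T^0$ on facets produce a nonzero contribution since they are only controlled in the polynomial order $k-1$.

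\textbf{Next I would bound $\bld u_T^\perp$.} Testing with $\bld v_T^\perp = \bld u_T^\perp$ and using coercivity (an analogue of \eqref{eq:coercivity} for $\Sh^{\,-}$) plus the $\lambda$-term gives
\begin{equation*}
  \mu \|\bld u_T^\perp\|_{1,h}^2 + \lambda \|\Div \bld u_T^\perp\|^2
  \preceq \hat a_h(\bld u_T^\perp,\bld u_T^\perp) = \hat f(\Pi_V \bld u_T^\perp).
\end{equation*}
Applying integration by parts as above to the right-hand side (using $\Pi_V \bld u_T^\perp \in H_0(\Div)$ and $\Div \Pi_V \bld u_T^\perp = \Pi_Q \Div \bld u_T^\perp$),
\begin{equation*}
  \hat f(\Pi_V \bld u_T^\perp) = -(\phi, \Pi_Q \Div \bld u_T^\perp)_\Omega \leq \|\phi\| \, \|\Div \bld u_T^\perp\|.
\end{equation*}
Combined with the LBB-type stability $\|\bld u_T^\perp\|_{1,h} \preceq \|\Div \bld u_T^\perp\|$ that holds on $\Sh^{\,-,\perp}$ (analogous to Lemma \ref{lem:infsupstokes}, available from \cite{LLS_ARXIV_2017}), this yields
\begin{equation*}
  (\mu + \lambda) \|\bld u_T^\perp\|_{1,h}^2 \preceq \|\phi\|_1 \|\bld u_T^\perp\|_{1,h},
\end{equation*}
from which $\|\bld u_T^\perp\|_{1,h} \preceq (\mu+\lambda)^{-1}\|\phi\|_1 = \mathcal{O}(\lambda^{-1})$ follows.

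\textbf{The main obstacle} is the first step: getting $\bld u_T^0 = \bld 0$ in spite of the fact that functions in $\Sh^{\,-,0}$ are only element-wise divergence-free and can have nontrivial higher-order normal jumps. The resolution is entirely structural: the modified load $\hat f(\Pi_V \cdot)$ composes the gradient source with the BDM reconstruction, and the commuting diagram property $\Div \Pi_V = \Pi_Q \Div$ is precisely what turns an element-wise divergence-free test function into a globally divergence-free, $H_0(\Div)$-conforming one, thereby restoring the discrete $L^2$-orthogonality between gradient fields and discretely divergence-free test functions that is the hallmark of gradient-robustness. The remaining estimate for $\bld u_T^\perp$ is a routine repetition of the argument in Theorem \ref{thm:gradientrobustnessh}, modulo verifying that the relevant LBB stability carries over to the relaxed space.
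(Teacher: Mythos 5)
Your proposal is correct and follows essentially the same route as the paper: show $\hat f(\Pi_V \bld v_T^0)=0$ for all $\bld v_T^0\in \Sh^{\,-,0}$ via integration by parts, using that $\Pi_V$ maps into the normal-continuous space with $\Div \Pi_V \bld v_T^0 = \Div \bld v_T^0 = 0$ (the paper establishes this identity in Lemma \ref{lemma:cs2} from \eqref{eq:ass2B}--\eqref{eq:ass2} rather than quoting a commuting diagram, but it is the same fact), and then repeat the coercivity plus inf-sup argument of Theorem \ref{thm:gradientrobustnessh} for $\bld u_T^\perp$. Your explicit caveat about transferring the LBB stability to $\Sh^{\,-,\perp}$ is a point the paper leaves implicit as well, so there is no substantive difference.
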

\begin{proof}
  With $\hat f(\cdot) = (\nabla \phi, \Pi_V \cdot)_{\Omega}$
  there holds after partial integration
\begin{equation}\label{eq:fvh1}
  \hat f(\bld v_T^0)= - \sum_{T \in \mathcal{T}_h} (\phi, \Div
  \Pi_V \bld v^0_T)_T + \sum_{F \in \mathcal{F}_h} ( \phi, \jmp{
  \Pi_V \bld v^0_T \cdot \bld n}_F ) = 0~ \forall ~ \bld v_T^0 \in
  \Sh^{\,-,0}.
\end{equation}
where we used $\Div \Pi_V \bld v^0_T=0$ cf. \cite[Lemma
4.8]{LLS_ARXIV_2017} and $\jmp{ \Pi_V \bld v^0_T \cdot \bld n}_F = 0$. The remainder of the proof follows from the proof of Theorem \ref{thm:gradientrobustnessh}.
\end{proof}
For the robustness of the scheme we give the following improved version of Lemma \ref{lemma:cs} (in the DG setting).
\begin{lemma}
 \label{lemma:cs2}
 Let $\bld u\in [H_0^2(\Omega)]^d$ be the solution to the equations \eqref{eqns} and define the splitting $\bld f = \bld f^\mu + \bld f^\lambda$ with $\bld f^\mu = - \Div\left(2\mu\gradss \bld u\right)$ and $\bld f^\lambda = -  \grads\left(\lambda\,\Div \bld u\right)$
 and $f(\cdot) = f^{\mu}(\cdot) + f^{\lambda}(\cdot)$ and $\hat f(\cdot) = \hat f^{\mu}(\cdot) + \hat f^{\lambda}(\cdot)$ correspondingly.
 Denote $\compU:=(\bld u, \bld u^t)\in \compUspace$.
 There holds for all $\compV=(\bld v_T,\bld v_F)\in \compUhspace^{\,-}$ 
 \begin{subequations}
 \begin{align}
  \label{csmu2}
  a_h^{\mu}(\compU, \compV) &= \hat f^\mu(\Pi_V \bld v_T) + \widetilde{\mathcal{E}}_c^\mu(\bld u, \compV),\\
  \label{cslambda2}
  a_h^\lambda(\compU, \compV) &= \hat f^\lambda(\Pi_V \bld v_T), \\ 
  \label{cs2}
  a_h(\compU, \compV) &= \hat f(\Pi_V \bld v_T) + \widetilde{\mathcal{E}}_c^{\mu}(\bld u, \compV), \\
\text{with } \qquad 
\widetilde{\mathcal{E}}_c^{\mu}(\bld u, \compV)
& = \mathcal{E}_c^{\mu}(\bld u, \compV) + \hat f^\mu(\bld v_T - \Pi_V \bld v_T). 
 \end{align}
\end{subequations}
Moreover, for $\bld u\in [H_0^\ell(\Omega)]^d$, $\ell\ge 2$ and $1\le m\le\min(k,\ell-1)$ we have
\begin{align}
  \widetilde{\mathcal{E}}_c^{\mu}(\bld u, \compV)
& \preceq 
h^m \mu^{1/2}\|\bld u\|_{m+1}\|\compV\|_{\mu,h}.
  \label{cs-term-est2}
\end{align}
\end{lemma}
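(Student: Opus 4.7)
The plan is to take the identities from Lemma~\ref{lemma:cs} as the starting point and rewrite their right-hand sides in terms of the modified functional $\hat f(\Pi_V \cdot)$. Two ingredients drive the proof: the $\lambda$-consistency error must vanish completely under the modification (the central design feature of scheme \eqref{scheme-3}), and the $\mu$-part picks up one extra term that must be bounded.

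For the $\lambda$-identity \eqref{cslambda2}, since $\Pi_V \bld v_T \in \Sh \subset H_0(\Div,\Omega)$ and $\lambda \Div \bld u \in H^1(\Omega)$, integration by parts yields
\[
\hat f^\lambda(\Pi_V \bld v_T) = -(\nabla(\lambda \Div \bld u), \Pi_V \bld v_T)_\Omega
= (\lambda \Div \bld u, \Div(\Pi_V \bld v_T))_\Omega.
\]
Invoking the commuting-diagram property $\Div(\Pi_V \bld v_T) = \Pi_Q \Div \bld v_T$ of the generalized BDM reconstruction from \cite{LLS_ARXIV_2017}, together with the observation that $\Div \bld v_T \in \pol^{k-1}(T) = Q_h|_T$ element-wise for any $\bld v_T \in \Sh^-$ so that $\Pi_Q$ acts as the identity on $\Div \bld v_T$, we conclude $\hat f^\lambda(\Pi_V \bld v_T) = (\lambda \Div \bld u, \Div \bld v_T) = a_h^\lambda(\compU, \compV)$, i.e., \eqref{cslambda2} holds without any remainder.

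For the $\mu$-part, identity \eqref{csmu} in Lemma~\ref{lemma:cs} directly gives $a_h^\mu(\compU, \compV) = \hat f^\mu(\bld v_T) + \mathcal{E}_c^\mu(\bld u, \compV)$. Splitting $\hat f^\mu(\bld v_T) = \hat f^\mu(\Pi_V \bld v_T) + \hat f^\mu(\bld v_T - \Pi_V \bld v_T)$ then produces \eqref{csmu2} with $\widetilde{\mathcal{E}}_c^\mu$ as defined, and summation of \eqref{csmu2} and \eqref{cslambda2} yields \eqref{cs2}.

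The main obstacle is the bound \eqref{cs-term-est2}. The contribution from $\mathcal{E}_c^\mu$ is already controlled via Lemma~\ref{lemma:cs}. For the additional term $\hat f^\mu(\bld v_T - \Pi_V \bld v_T) = -(\Div(2\mu\gradss \bld u), \bld v_T - \Pi_V \bld v_T)$, I would perform element-wise integration by parts to obtain a volume contribution $\sum_T (2\mu \gradss \bld u, \gradss(\bld v_T - \Pi_V \bld v_T))_T$ and boundary jump terms structurally analogous to those already present in $\mathcal{E}_c^\mu$ (the normal-trace pieces on interior facets cancel thanks to $\jmp{\Pi_V \bld v_T \cdot \bld n} = 0$, and on $\partial\Omega$ due to $\Pi_V \bld v_T \cdot \bld n = 0$). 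Finishing then requires the approximation properties of the generalized BDM operator on $\Sh^-$, available in \cite{LLS_ARXIV_2017}, which control $\|\bld v_T - \Pi_V \bld v_T\|_T$ and $\|\gradss(\bld v_T - \Pi_V \bld v_T)\|_T$ by $\|\gradss \bld v_T\|$ on a local element patch, combined with a Bramble-Hilbert-type argument on $\bld u$ to extract the $h^m$ factor from the regularity $\bld u \in [H^{m+1}(\Omega)]^d$. Re-assembling these estimates against $\|\compV\|_{\mu,h}$ delivers \eqref{cs-term-est2}.
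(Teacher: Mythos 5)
Your derivation of the identities \eqref{csmu2}, \eqref{cslambda2} and \eqref{cs2} is correct and matches the paper in substance: for \eqref{cslambda2} the paper establishes $\Div(\Pi_V \bld v_T)=\Div(\bld v_T)$ for $\bld v_T\in\Sh^-$ directly from the moment conditions \eqref{eq:ass2B} and \eqref{eq:ass2} and then integrates by parts (using that the normal trace of $\Pi_V\bld v_T$ is single-valued), which is the same computation you phrase via a commuting-diagram property; and \eqref{csmu2} is indeed just \eqref{csmu} plus the splitting $\hat f^\mu(\bld v_T)=\hat f^\mu(\Pi_V\bld v_T)+\hat f^\mu(\bld v_T-\Pi_V\bld v_T)$.

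The gap is in your treatment of the new term $\hat f^\mu(\bld v_T-\Pi_V\bld v_T)$ in the bound \eqref{cs-term-est2}. If you integrate by parts element-wise, the resulting volume contribution $\sum_T(2\mu\gradss\bld u,\gradss(\bld v_T-\Pi_V\bld v_T))_T$ carries \emph{no} power of $h$: the stability estimate $\|\gradss(\id-\Pi_V)\bld v_T\|_T\preceq\|\nabla\bld v_T\|_T$ gives only an $\mathcal{O}(1)$ bound, and there is no orthogonality left in this form against which a Bramble--Hilbert argument on $\bld u$ can act (the functional $\bld u\mapsto(2\mu\gradss\bld u,\gradss\bld w)_T$ does not vanish for constant $\gradss\bld u$, because $\bld w=\bld v_T-\Pi_V\bld v_T$ has nonvanishing traces). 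Your claimed cancellation of the boundary terms is also inaccurate: only the normal jump of the $\Pi_V\bld v_T$ part vanishes, while the normal jump of $\bld v_T$ (the very source of $\mathcal{E}_c^\mu$) and the full tangential jumps of $\bld v_T-\Pi_V\bld v_T$ survive. The mechanism that actually produces the $h^m$ factor — and which your sketch never identifies — is the $L^2(T)$-orthogonality $(\bld v_T-\Pi_V\bld v_T,\varphi)_T=0$ for $\varphi\in[\mathcal{N}^{k-2}(T)]^d\supseteq[\mathcal{P}^{m-2}(T)]^d$ from \eqref{eq:ass2}. The paper therefore does \emph{not} integrate by parts: it writes $(\bld f^\mu,\bld v_T-\Pi_V\bld v_T)=(\bld f^\mu-\mathcal{P}^{m-2}\bld f^\mu,\bld v_T-\Pi_V\bld v_T)$, then applies Cauchy--Schwarz together with $\|\bld f^\mu-\mathcal{P}^{m-2}\bld f^\mu\|\preceq h^{m-1}\|\bld f^\mu\|_{m-1}\preceq h^{m-1}\mu\|\bld u\|_{m+1}$ and $\|(\id-\Pi_V)\bld v_T\|_T\preceq h\|\nabla\bld v_T\|_T$ (constants are in the kernel of $\id-\Pi_V$), yielding $h^m\mu^{1/2}\|\bld u\|_{m+1}\|\compV\|_{\mu,h}$. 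You should replace your integration-by-parts step by this direct orthogonality argument.
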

\begin{proof}
  From \eqref{csmu} the result \eqref{csmu2} follows directly.
  Next, we note that
 $\Div \Pi_V \bld v_T = \Div \bld v_T$ for $\bld v_T \in \Sh^-$. This, we can see from the following observation. Let $q \in \mathbb{P}^{k-1}(T)$ and $T \in \mathcal{T}_h$. Then, we have
    \begin{align*}
      \int_{T} \Div(\Pi_V \bld v_T ) q \, dx 
      & = - \int_{T} \Pi_V \bld v_T  \cdot \nabla q \, dx +  \int_{\partial T} \Pi_V \bld v_T \cdot n \, q \, ds \\
      & = - \int_{T} \bld v_T \cdot \nabla q \, dx + \int_{\partial T} \bld v_T \cdot n \, q \, ds
      = \int_{T} \Div(\bld v_T) q \, dx 
    \end{align*}
where we exploited \eqref{eq:ass2B} and \eqref{eq:ass2} of the BDM interpolation. As $\Div(\bld v_T), \Div(\Pi_V \bld v_T) \in \mathbb{P}^{k-1}(T)$ we obtain $\Div(\bld v_T) = \Div(\Pi_V \bld v_T)$ pointwise. Then, \eqref{cslambda2} follows from partial integration:
  \begin{align*}
\hat    f^{\lambda}(\Pi_V \bld v_T) & = \sum_{T\in \mathcal{T}_h} \int_T - \nabla ( \lambda \Div \bld u ) \Pi_V \bld v_T \dx \\
                          & = \sum_{T\in \mathcal{T}_h} \int_T \lambda \Div \bld u \underbrace{\Div (\Pi_V \bld v_T)}_{= \Div \bld v_T} \dx  - \int_{\partial T} \lambda \Div \bld u \Pi_V \bld v_T \cdot \bld n \ds \\
                          & = a_h^{\lambda}(\compU,\compV) - \sum_{F \in \mathcal{F}_h \setminus \partial \Omega} \int_{F} \lambda \Div \bld u  \underbrace{\jmp{\Pi_V \bld v_T}_\ast}_{=0} \cdot \bld n \ds = a_h^{\lambda}(\compU,\compV). 
  \end{align*}
Next, we note that for $T \in \mathcal{T}_h$ there holds with standard Bramble-Hilbert arguments ($\bld v_T \in H^1(T)$)
\begin{equation}\label{eq:bhr}
  \| (\id - \Pi_V) \bld v_T \|_T^2 \preceq h \Vert \nabla \bld v_T \Vert_T
\end{equation}
as constants are in the kernel of $\id - \Pi_V$. Let further $\mathcal{P}^{m-2} \bld f$ be the element-wise $L^2$ projection into $[\Pi^{m-2}(T)]^d,~T\in\mathcal{T}_h$. Then, we have
  \begin{align*}
    ( \bld f^{\mu},& \bld v_T - \Pi_V \bld v_T)
    =    ( \bld f^{\mu} - \mathcal{P}^{m-2} \bld f^{\mu}, \bld v_T - \Pi_V \bld v_T) 
    \leq    \Vert \bld f^{\mu} - \mathcal{P}^{m-2} \bld f^{\mu} \Vert \Vert \bld v_T - \Pi_V \bld v_T \Vert \\
    &\preceq  h^{m-1}  \Vert \bld f^{\mu} \Vert_{m-1}~  h \Vert \bld v_T \Vert_{1,h} 
    \preceq  h^{m}  \mu \Vert \bld u \Vert_{m+1}~  \Vert \compV \Vert_{1,h} 
    \preceq  h^{m}  \mu^{\frac12} \Vert \bld u \Vert_{m+1}~  \Vert \compV \Vert_{\mu,h}.
  \end{align*}
Here, we made use of \eqref{eq:ass2} in the last step.
\end{proof}

Finally, the locking-free error estimates for the scheme \eqref{scheme-3}
is given below.
\begin{theorem}
\label{thm:3}
 Assume $k\ge 2$ and the regularity $\bld u\in [H^{k+1}(\Omega)]^d$.
 Let
 $\bld u_T \in \Sh^{\,-}$ be the numerical solution to the scheme \eqref{scheme-3-dg} (or equivalently 
 $\compUh = (\bld u_T, \mathcal{L}_h(\bld u_T)) \in \compUhspace^{\,-}$ the numerical solution to \eqref{scheme-3}).
 Then, for sufficiently large stabilization parameter $\alpha_0$, the estimates \eqref{est-1}--\eqref{est-2} hold.
\end{theorem}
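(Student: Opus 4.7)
The plan is to mimic the five-step proof of Theorem~\ref{thm:energy}, with Galerkin orthogonality, which is lost on the relaxed space $\compUhspace^{\,-}$, replaced by the modified consistency \eqref{cs2}--\eqref{cslambda2} of Lemma~\ref{lemma:cs2}. Steps~1--3 (coercivity, norm equivalence, boundedness) involve only $a_h$ and the compound norms and transfer verbatim. A preparatory observation is that the same element-wise integration-by-parts argument behind Lemma~\ref{lemma:cs2} also produces a dual analogue
\[
a_h(\compS, \compV) = (\bld \theta, \Pi_V \bld v_T) + \widetilde{\mathcal{E}}_c^\mu(\bld \phi, \compV), \qquad \compV \in \compUhspace^{\,-} + \compUspace,
\]
for any solution $\compS = (\bld \phi, \bld \phi^t)$ of the dual elasticity problem with data $\bld \theta$. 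Exactly as in \eqref{cslambda2}, the $\lambda$-part of this dual residual cancels because $\Div \Pi_V = \Pi_Q \Div$ and because $\Pi_V \bld v_T \cdot \bld n$ is continuous across facets.

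In Step~4 I use the BDM-based comparison $\compVh := (\Pi_V \bld u, \Pi_M \bld u) \in \compUhspace \subset \compUhspace^{\,-}$. Coercivity, positivity of $a_h^\lambda$, the scheme \eqref{scheme-3}, and \eqref{cs2} combine to yield
\[
\|\compUh - \compVh\|_{\mu,h}^2 + \lambda \|\Div(\bld u_T - \bld v_T)\|^2 \preceq a_h(\compU - \compVh, \compUh - \compVh) - \widetilde{\mathcal{E}}_c^\mu(\bld u, \compUh - \compVh),
\]
and the $a_h^\lambda$-piece of the first right-hand term vanishes by the commuting-diagram identity $\Div \Pi_V \bld u = \Pi_Q \Div \bld u$ paired with $\Div(\bld u_T - \bld v_T) \in Q_h$. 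Applying boundedness (Step~3), \eqref{cs-term-est2} with $m=k$, and Bramble--Hilbert for $\|\compU - \compVh\|_{\mu,*,h}$, I recover the bound \eqref{iii}, which yields \eqref{est-1} and, via the triangle inequality, \eqref{est-1b}; the right-hand sides depend only on $\mu^{1/2}$, not on $\lambda$.

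In Step~5 I set $\bld \theta := \bld u - \bld u_T$, let $\bld \phi$ solve the dual problem, and take $\compSh := (\Pi_V \bld \phi, \Pi_M \bld \phi) \in \compUhspace$. Starting from $\|\bld u - \bld u_T\|^2 = (\bld \theta, \bld u - \bld u_T)$, inserting the dual identity above with $\compV = \compU - \compUh$, adding and subtracting $\compSh$, using symmetry of $a_h$ together with $a_h(\compU - \compUh, \compSh) = \widetilde{\mathcal{E}}_c^\mu(\bld u, \compSh)$ (a direct consequence of \eqref{cs2} and the scheme), I arrive at
\[
\|\bld u - \bld u_T\|^2 = a_h(\compS - \compSh, \compU - \compUh) + \widetilde{\mathcal{E}}_c^\mu(\bld u, \compSh) - \widetilde{\mathcal{E}}_c^\mu(\bld \phi, \compU - \compUh) + (\bld \theta, (\id - \Pi_V)(\bld u - \bld u_T)).
\]
The first three terms are controlled routinely by boundedness, \eqref{cs-term-est2} (with $m=1$ for $\bld \phi$ and $m=k$ for $\bld u$), the already-established \eqref{est-1}, and the regularity shift \eqref{dual}.

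The main obstacle is the extra residual $(\bld \theta, (\id - \Pi_V)(\bld u - \bld u_T))$, which has no counterpart in the proofs of Theorem~\ref{thm:energy} or Theorem~\ref{thm:2}: it is the price paid for testing against $\Pi_V \bld v_T$ instead of $\bld v_T$ in \eqref{scheme-3}. I plan to dispatch it by splitting $(\id - \Pi_V)(\bld u - \bld u_T) = (\id - \Pi_V) \bld u - (\id - \Pi_V)(\bld u_T - \Pi_V \bld u)$ (using $(\id - \Pi_V)\Pi_V = 0$), bounding the first summand by the standard BDM estimate $\|(\id - \Pi_V) \bld u\| \preceq h^{k+1} \|\bld u\|_{k+1}$, and the second by the $\mathcal{O}(h)$ stability of $\id - \Pi_V$ on piecewise $H^1$ combined with $\|\bld u_T - \Pi_V \bld u\|_{1,h} \preceq h^k \|\bld u\|_{k+1}$ from the energy estimate. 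Collecting everything produces $\|\bld u - \bld u_T\| \preceq h^{k+1} \|\bld u\|_{k+1}$, i.e.\ \eqref{est-2}, and closes the proof.
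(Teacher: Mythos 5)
Your proposal is correct and, for the energy estimates \eqref{est-1}--\eqref{est-1b}, is essentially identical to the paper's proof (same comparison function $\compVh = (\Pi_V\bld u,\Pi_M\bld u)$, same use of $\widetilde{\mathcal{E}}_c^{\mu}$ from Lemma \ref{lemma:cs2} in place of Galerkin orthogonality, same vanishing of the $a_h^\lambda$-term via the commuting-diagram property). The genuine difference is in the duality step. The paper sidesteps the residual $(\bld\theta,(\id-\Pi_V)(\bld u-\bld u_T))$ entirely by choosing the dual datum $\bld\theta=\Pi_V(\bld u-\bld u_T)$, so that this pairing is claimed to vanish, and then recovers $\|\bld u-\bld u_T\|$ at the very end by the triangle inequality $\|\bld u-\bld u_T\|\le\|\Pi_V(\bld u-\bld u_T)\|+\|(\id-\Pi_V)(\bld u-\bld u_T)\|$ together with $\|(\id-\Pi_V)(\bld u-\bld u_T)\|\preceq h\|\compU-\compUh\|_{1,h}$. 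You keep $\bld\theta=\bld u-\bld u_T$ and instead bound the residual directly through the splitting $(\id-\Pi_V)(\bld u-\bld u_T)=(\id-\Pi_V)\bld u-(\id-\Pi_V)(\bld u_T-\Pi_V\bld u)$. Both routes consume exactly the same two ingredients --- the superapproximation property \eqref{eq:bhr} of $\id-\Pi_V$ on piecewise polynomials (plus a discrete Korn inequality to pass from $\|\cdot\|_{1,h}$ to the full broken gradient) and the already-established $\mathcal{O}(h^k)$ energy bound --- so neither buys more than the other; yours is marginally more direct, the paper's keeps the dual identity exactly consistent term by term. One caveat on your preparatory ``dual analogue'': Lemma \ref{lemma:cs2} is stated only for $\compV\in\compUhspace^{\,-}$, and for the smooth component of $\compV=\compU-\compUh$ the $\lambda$-part does \emph{not} cancel exactly, since $\Div\Pi_V\bld u=\Pi_Q\Div\bld u\neq\Div\bld u$; the term $\lambda\,((\id-\Pi_Q)\Div\bld\phi,(\id-\Pi_Q)\Div\bld u)$ from the proof of Theorem \ref{thm:energy} reappears. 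It is bounded by $h^{k+1}\|\bld\theta\|\,\|\Div\bld u\|_{k}\preceq h^{k+1}\|\bld\theta\|\,\|\bld u\|_{k+1}$ using \eqref{dual}, so your conclusion is unaffected, but the blanket cancellation claim as stated is too strong.
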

\begin{proof}
Proceeding as in the proof of Theorem \ref{thm:2} (and hence using the equivalent HDG-version again) with $\compVh = (\Pi_V \bld u, \Pi_M \bld u)\in \compUhspace\subset \compUhspace^{\,-}$ and $\compWh := \compUh - \compVh \in \compUhspace^{\,-}$, we obtain
\begin{align*}
&  \|\compWh\|_{\mu,h}^2
  +\lambda \|\Div(\bld w_T)\|^2\\
& \preceq a_h(\compWh, \compWh) 
 = a_h(\compU-\compVh, \compWh) - \widetilde{\mathcal{E}}_c^{\mu}(\bld u,\compWh)\\
 &= a_h^{\mu}(\compU-\compVh,\; \compWh) 
	+ \underbrace{a_h^{\lambda}(\compU-\compVh,\; \compWh)}_{=0}  -\widetilde{\mathcal{E}}_c^{\mu}(\bld u,\compWh) 
 \\
&\preceq \left( \|\compU-\compVh\|_{\mu,\ast,h} 
+\mu^{\frac12}h^k\|\bld u\|_{k+1}\right) \|\compWh\|_{\mu,h}.
\end{align*}
With interpolation estimates for $\|\compU-\compVh\|_{\mu,\ast,h} $ this implies
\begin{align*}
\|\compUh-\compVh\|_{\mu,h}
  +\lambda^{\frac12}\|\Div(\bld u_T-\bld v_T)\|
  \preceq   \mu^{\frac12} h^k \|\bld u\|_{k+1}.
\end{align*}
Then, the estimates \eqref{est-1} and  \eqref{est-1b} follow from triangle inequalities.

For the $L^2$-estimate, let $\bld \phi$ be the solution to the dual problem \eqref{dual-eq} with 
$\bld \theta = \Pi_V (\bld u- \bld u_T)$ and $\compSh \in \compUhspace$ the corresponding interpolation as before.
Noting that
	$\widetilde{\mathcal{E}}_c^{\mu}(\cdot,\compWh)$ does not depend on $\bld w_F = \bld u_F-\Pi_M \bld u $, cf. Lemma \ref{lemma:cs} and Lemma \ref{lemma:cs2},
and $\compS = (\bld \phi, \bld \phi^t)$ 
we get for
$\Pi \compV = \Pi (\bld v_T, \bld v_F) = (\Pi_V \bld v_T, \Pi_M \bld v_F)$, $\compV = (\bld v_T, \bld v_F) \in \compUspace$
\begin{align*}
  \|\Pi_V (\bld u- \bld u_T) \|_\Omega^2 &=
a_h(\compS, \Pi(\compU - \compUh))  - \overbrace{\widetilde{\mathcal{E}}_c^{\mu}(\bld \phi, \Pi(\compU - \compUh))}^{=0}\\
&= 
a_h(\compS, \compU-\compUh) 
- a_h(\compS, (\id-\Pi)(\compU-\compUh)) 
\\[-3ex]
&=
a_h(\compS - \compSh, \compU-\compUh) 
- a_h(\compS, (\id-\Pi)(\compU-\compUh)) 
+
\overbrace{\widetilde{\mathcal{E}}^{\mu}_c(\bld u, \compSh)}^{=0}
\\
&=
a_h(\compS - \compSh, \compU-\compUh) 
- \underbrace{(\bld \theta,(\id-\Pi_V)(\bld u-
\bld u_T))}_{( \Pi_V(\bld u-
\bld u_T),(\id-\Pi_V)(\bld u-
\bld u_T)) = 0}
\\
& \preceq 
h ( \mu \Vert \bld \phi \Vert_2 + \lambda \Vert \Div \bld \phi \Vert_1) (\mu^{-\tfrac12} \Vert \compU - \compUh \Vert_{\mu,\ast,h} + \Vert (\id - \Pi_Q) \Div \bld u_T \Vert) \\
 & \preceq
 \| \Pi_V (\bld u- \bld u_T) \|_\Omega
   \cdot \left(
   h \left( \mu^{-\tfrac12} \Vert \compU - \compUh \Vert_{\mu,\ast,h} + \Vert (\id - \Pi_Q) \Div \bld u \Vert \right) \right)
\end{align*}
Dividing by $\| \Pi_V (\bld u- \bld u_T) \|_\Omega$ and applying the triangle inequality:
$$
\| \bld u- \bld u_T \|_\Omega \leq \| \Pi_V (\bld u- \bld u_T) \|_\Omega +
\underbrace{\| (\id - \Pi_V) (\bld u- \bld u_T) \|_\Omega}_{\preceq h \Vert \compU - \compUh \Vert_{1,h}}
$$
yields
$$
\| \bld u- \bld u_T \|_\Omega \preceq  h \left( \mu^{-\tfrac12} \Vert \compU - \compUh \Vert_{\mu,\ast,h} + \Vert (\id - \Pi_Q) \Div \bld u \Vert \right)
$$
and hence the claim.
\end{proof}

With this result we conclude that method \eqref{scheme-3} has quasi-optimal a-priori error bounds and is free of locking, i.e. it is volume-locking free and gradient-robust.

\subsection{Numerical results for 
the scheme \eqref{scheme-3}}
The numerical results for the 
two examples in Section \ref{sec:mot} 
for the scheme \eqref{scheme-3} are given in Figure
\ref{fig:H3ab}. 
The results are essentially similar to those for 
the scheme \eqref{scheme}. In particular, 
we observe that the discrete norms in Example \ref{ex:2}
are essentially independent of $h$.
\begin{figure}
  \begin{center}
    \includegraphics[height=0.255\textwidth]{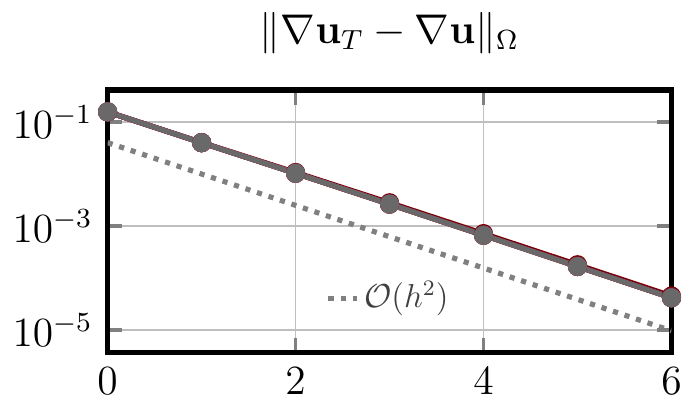}
    \hspace*{-0.02\textwidth}
\includegraphics[height=0.255\textwidth]{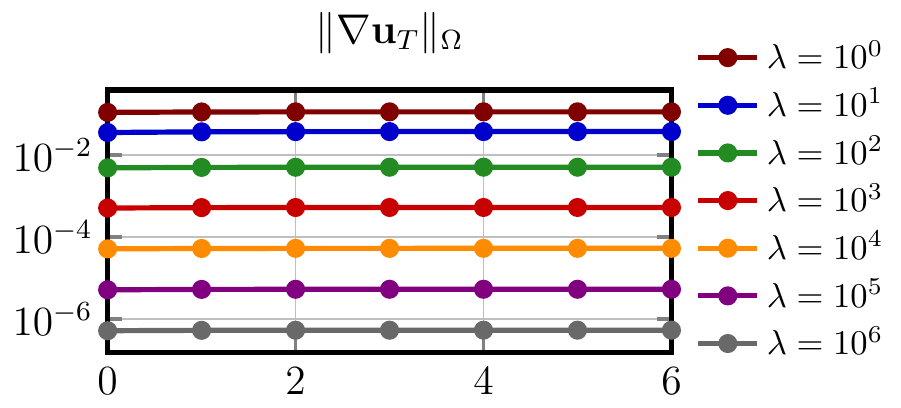}
    \hspace*{-0.06\textwidth}
  \end{center}    
  \caption{Discretizaton error for Example \ref{ex:1} (left) and norm of discrete error for Example \ref{ex:2} (right) for the method \eqref{scheme-3}, $k=2$, on a barycentric-refined mesh under mesh refinement ($x$-axis: refinement level $L$) and different values of $\lambda$ for Example \ref{ex:2}.} 
  \label{fig:H3ab}
  \hspace*{-0.02\textwidth}
\end{figure}


\section{Conclusion}
\label{sec:conclude}
The concept of gradient-robustness for numerical methods for linear
elasticity is introduced in this paper.
The class of divergence-conforming HDG methods are presented and analyzed as
an example of {volume-locking-free} and {gradient-robust} finite element
methods for linear elasticity. Two efficient variants of the base
divergece-conforming HDG scheme with reduced globally coupled degrees of
freedom are also discussed and analyzed.

\section*{Appendix. The BDM interpolator for discontinuous functions}\label{sec:bdm}
The BDM interpolator for discontinuous functions is defined element-by-element for $\bld v_T \in H^1(T)$ through
  \begin{subequations} \label{eq:ass}
    \begin{align}
      (\Pi_V \bld v_T \! \cdot \! \bld n, \varphi)_{F}
      & =
        (
       \{\!\!\{ \bld v_T \! \cdot \! \bld n \}\!\!\}_*, \varphi)_{F}
      && \forall~ \varphi \!\in\! \mathcal{P}^{k}\!(F), F \!\in\! \partial T,  \label{eq:ass2B}\\
      (\Pi_V \bld v_T, \varphi)_{T}
      & =
      (\bld v_T, \varphi)_{T}
      && \forall~ \varphi \in [\mathcal{N}^{k-2}(T)]^d,  \label{eq:ass2}
    \end{align}
  \end{subequations}
with $\mathcal{N}^{k-2}:=[\mathcal{P}^{k-2}(T)]^d + [\mathcal{P}^{k-2}(T)]^d \times x$ and $\{\!\!\{ \cdot \}\!\!\}_*$ the usual DG average operator, cf. \cite{CockburnKanschatSchotzau05,guzman2016h}.

\bibliographystyle{siam}
\bibliography{all}

\end{document}